\numberwithin{equation}{section}
\newtheorem{theorem}{Theorem}[section]
\newtheorem{corollary}[theorem]{Corollary}
\newtheorem{proposition}[theorem]{Proposition}
\newtheorem{lemma}[theorem]{Lemma}
\newtheorem{definition}[theorem]{Definition}
\newtheorem{question}{Question}
\theoremstyle{remark}
\newtheorem{remark}[theorem]{Remark}
\newtheorem{problem}{Problem}
\begin{document}

\thanks{}

\author{F. Micena}
\address{Departamento de Matem\'atica,
  IM-UFAL Macei\'{o}-AL, Brazil.}
\email{fpmicena@gmail.com}
\author{A. Tahzibi}
\address{Departamento de Matem\'atica,
  ICMC-USP S\~{a}o Carlos-SP, Brazil.}
\email{tahzibi@icmc.usp.br}

\renewcommand{\subjclassname}{\textup{2000} Mathematics Subject Classification}

\date{\today}

\setcounter{tocdepth}{2}

\title[Regularity of foliations and Lyapunov exponents]{Regularity of foliations and Lyapunov exponents of partially hyperbolic Dynamics}
\maketitle
\begin{abstract}
In this work  we study relations between regularity of invariant foliations and Lyapunov exponents of partially hyperbolic diffeomorphisms. We suggest a new regularity condition for foliations in terms of desintegration of Lebesgue measure which can be considered as a criterium for rigidity of Lyapunov exponents.
\end{abstract}
%
%
%

%

\section{Introduction}\label{section.preliminaries}

In this paper we address the regularity of invariant foliations of partially hyperbolic dynamics and its relations to Lyapunov exponents and rigidity.
We suggest a new regularity condition (Uniform Bounded Density property) for foliations which is defined in terms of desintegration of Lebesgue measure along the leaves of the foliation. In principle it can be compared with the absolute continuity of foliations. However, for (un)stable foliations of partially hyperbolic diffeomorphisms the works of Pesin-Sinai \cite{PS}, Ledrappier \cite{L}  shed light on the subject and it turns out that for these foliations our condition impose a kind of regularity much stronger than absolute continuity.  However, we believe that exploiting this regularity condition can be considered as a geometric measure theoretical criterium for the rigidity of partially hyperbolic dynamics.


From now on, we shall consider a smooth measure $m$ (Lebesgue measure) on $\mathbb{T}^3$ and a $C^2$ diffeomorphism $f:M\to M$ preserving $m$.  $f$ is called (absolute) partially hyperbolic if there exists a
$Df$-invariant splitting of the tangent bundle $TM = E_f^s \oplus E_f^c \oplus E_f^u$ and constants
$\nu_- \leq \nu_+ < \mu_- \leq 1 \leq \mu_+ < \lambda_- \leq \lambda_+$ and $C > 0,$ satisfying
$$\frac{1}{C} \nu_-^n || v|| \leq ||Df^{n}(x) v|| \leq C \nu_+^n||v|| , \; \forall v \in E_f^s(x), $$

$$\frac{1}{C} \mu_-^n || v|| \leq ||Df^{n}(x) v|| \leq C \mu_+^n||v|| , \; \forall v \in E_f^c(x), $$

$$\frac{1}{C} \lambda_-^n || v|| \leq ||Df^{n}(x) v|| \leq C \lambda_+^n||v|| , \; \forall v \in E_f^u(x). $$

It is possible to choose a riemannian metric in $M$ that makes $C = 1$ in the above definition.  In this paper all  partially hyperbolic diffeomorphisms are defined on $\mathbb{T}^3.$ For simplicity we denote $Df(x)|E^{\sigma}_f(x)$ by $J^{\sigma}f(x), \sigma \in \{s,c,u\}.$ The distributions $E_f^s$ and $E_f^u,$ respectively  stable and  unstable bundle, are uniquely integrable to foliations $\mathcal{F}^s$ and $\mathcal{F}^u$ (See \cite{HPS}). In general case, $E^c$ is not  integrable. However for absolute partially hyperbolic diffeomorphisms on $\mathbb{T}^3$ the center bundle is integrable \cite{Br-Bu-Iv1}.

\subsection{Regularity of foliations}

Roughly speaking, a foliation is an equivalence relation on a manifold such that the equivalence classes (the leaves) are connected immersed sub manifolds. For dynamical invariant foliations, although typically the leaves enjoy a high degree of regularity they are not stacked up in a smooth fashion. To define the different regularity conditions we need foliated charts. For instance a codimension$-m$ foliation is $C^r$ if there exist a covering of the manifold by $C^r$ charts $\phi: U \rightarrow \mathbb{R}^n \times \mathbb{R}^m$ such that each plaque is sent into the hyperplane $\mathbb{R}^n \times \{\phi(p)\}.$

For a  $C^r-$partially hyperbolic diffeomorphism the invariant foliations $\mathcal{F}^s$ and $\mathcal{F}^u$ typically are at most H\"older continuous with $C^r$ leaves.
An important feature of stable and unstable foliations of partially hyperbolic diffeomorphism is their ``absolute continuity" property. In smooth ergodic theory, absolute continuity of foliations has been used by Anosov to prove the ergodicity of Anosov diffeomorphisms.  One of the weakest definitions (leafwise absolute continuity) is sufficient to prove the ergodicity of Anosov diffeomorphisms. See \cite{PVW} for other definitions and state of art of absolute continuity of foliations.

Consider $\mathcal{F}$ a foliation over $M.$  Denote by
$m$ the riemannian measure over $M$; and $\lambda_{\mathcal{F}_x}$; the riemannian  measure
over $\mathcal{F}_x$; the leaf through $x \in M.$ There is a unique desintegration $[\{m_{\mathcal{F}_x}\}]$ of $m$
along the leaves of the foliation. $[\{m_{\mathcal{F}_x}\}]$ are equivalent class of measures up to
scaling. In a foliation chart $U \subset M;$ denote $m_{\mathcal{F}_x};$ the probability measure which
comes from the Rokhlin desintegration of $m$ restricted to $U.$ In what follows we
use the unique notation $m_{\mathcal{F}_x (B)}$ to denote the desintegration of the plaque inside foliated box $B,$ which is a probability measure.
A

\begin{definition}[leafwise Absolute Continuity]\label{abs.contI}
 Let $\mathcal{F}$ be a foliation on $M.$ We say that  $ \mathcal{F}$ \textit{leafwise absolutely continuous}, if  it satisfies the following: A measurable set  $Z $ has zero Lebesgue measure if and only if for almost every $p \in M,$ the leaf  $\mathcal{F}_p$  meets  $Z$ in a  $\lambda_{\mathcal{F}_p}$ zero measure set, that is,  $\lambda_{\mathcal{F}_p} (Z) = 0,$ for almost everywhere $p \in M.$

Locally,  it is equivalent to

$$ \lambda_{\mathcal{F}_p} \sim m_{\mathcal{F}_p}, \;\; m-\mbox{almost everywhere} \;\; p \in U.$$
\end{definition}

In general setting it is not easy to understand the desintegration $[\{m_{\mathcal{F}_x}\}]$ of $m.$ In the case of leafwise absolutely continuous foliations the Radon-Nikodym derivative $\frac{dm_{\mathcal{F}_p}}{d\lambda_{\mathcal{F}_p}}$ is an interesting object to be studied.  This motivated us to introduce new regularity condition. We show that, if we assume $m_{\mathcal{F}_x}$ is ``universally proportional" to $\lambda_{\mathcal{F}_x},$ for almost everywhere $x \in M,$ independent of the size of $\mathcal{F}_x \cap U$  then many rigidity results hold.
To begin we need to work with long foliated boxes:

\begin{definition} [Long Foliated Box] \label{folbox} Let $\mathcal{F}$ be a one dimensional  foliation of $M^n.$ A set $B \subset M$ is  called a foliated box by $\mathcal{F}$ of size greater than or equal to $ R > 0,$ if:
\begin{enumerate}
\item $B$ is homeomorphic to $D^{n-1}\times (0,1)$ where $D^{n-1}$ is $(n-1)-$dimensional ball;
\item for each $x \in B,$ the length of $\mathcal{F}_x \cap B$ is greater than or equal to $ R > 0$ in the intrinsic riemannian metric of $\mathcal{F}_x.$
\end{enumerate}
\end{definition}

For any foliated box $B$ we denote by $m |B$ the normalized Lebesgue measure of $B$ and for any plaque $\mathcal{F}_x(B)$ the {\bf probability} induced Lebesgue measure on the plaque is denoted by $Leb_{\mathcal{F}_x(B)}.$ In the cases where the box is fixed, we write just $Leb_{\mathcal{F}_x}.$
\begin{definition}[Uniform Bounded Density]\label{abs.contIII}
 Let $\mathcal{F}$ be an one dimensional foliation on $M.$ We say that  $ \mathcal{F}$ has \textbf{uniform bounded density} (U.B.D) property, if there is $K > 1$ such that for every long foliated box of $\mathcal{F}$ in $M$ we have

$$\frac{1}{K} < \frac{dm_{\mathcal{F}_x}}{d Leb_{\mathcal{F}_x}} < K$$

independent of the size of  the foliated box and $x$.

\end{definition}

For example if $A$ is a linear partially hyperbolic automorphism of torus then the invariant foliations have U.B.D property.  In fact this is the case for any $f$ close to $A$ and $C^1$ conjugate to it.  If $f$ is Anosov automorphism of 3- torus we prove that in fact uniform bounded density property of unstable (or stable) foliation does imply the $C^1-$conjugacy to its linearization, see theorem \ref{thmrigidity}. We conjecture that in the context of general partially hyperbolic diffeomorphisms of $\mathbb{T}^3$ if both $\mathcal{F}^s$ and $\mathcal{F}^u$ have U.B.D property then $f$ is $C^1$ conjugate to its linearization.

Another example of  foliations with U.B.D property is the case of central foliation of ergodic partially hyperbolic diffeomorphisms on $M^3$ whenever it is absolutely continuous and the leaves are circles. Indeed as the length of central leaves are uniformly bounded (See \cite{PC} and \cite{gogolevcompact} for general statements.)the U.B.D property is equivalent to leafwise absolute continuity. A recent result of Avila-Viana-Wilkinson \cite{AVW} establishes that absolute continuity of central foliation in this setting implies $C^{\infty}$ regularity. We hope that U.B.D property of central foliations in general, may imply its differentiability.

Lyapunov exponents are important constants for measuring the assymptotic behaviour of dynamics in the tangent space level.
Let $f : M \rightarrow M $ be a measure preserving $C^1-$diffeomorphism. Then by Oseldets' Theorem for almost every $x \in M$ and any $v \in T_x(M)$ the following limit exists
$$
 \lim_{n \rightarrow \infty} \frac{1}{n} \log \|Df^n(x) v \|
$$
and is equal to one of the Lyapunov exponents of the orbit of $x.$ For a conservative partially hyperbolic diffeomorphism of $\mathbb{T}^3$ which is the main object of the study in this paper, we get a full Lebesgue measure subset $\mathcal{R}$ such that for each $x \in \mathcal{R}:$

$$
\lim_{n \rightarrow \infty} \frac{1}{n} \log \|Df^n(x) v^{\sigma} \| = \lambda^{\sigma}(f, x)
$$
where $\sigma \in \{s, c, u\}$ and $v^{\sigma} \in E^{\sigma}.$

Every diffeomorphism of the torus $f : \mathbb{T}^n \rightarrow \mathbb{T}^n$ induces an automorphism of the fundamental group and there exists a unique linear diffeomorphism $f_*$ which induces the same automorphism on $\pi_1(\mathbb{T}^n).$ $f_*$ is called the linearization of $f$ and in this paper we study the relations between Lyapunov exponents of $f$ and its linearization in the partially hyperbolic setting.

\section{Statement of Results and Questions}

First we prove that the uniform bounded density is a criterium for the rigidity of Lyapunov exponents in the context of partially hyperbolic diffeomorphisms of $\mathbb{T}^3.$

\begin{theorem} \label{thmrigidexponents} Let  $f: \mathbb{T}^3 \rightarrow \mathbb{T}^3,$ be a conservative partially hyperbolic diffeomorphism. Denote by $A = f_{\ast}$ and suppose that  stable and unstable foliations have the uniform bounded density property, then $\lambda^{\sigma}(f,\cdot) = \lambda^{\sigma}_A, \sigma \in \{s,c,u\}$ for almost everywhere $x \in \mathbb{T}^3.$
\end{theorem}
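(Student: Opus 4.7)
The central technical ingredient is the Pesin--Sinai formula describing the density $\rho_x = dm_{\mathcal{F}^u_x}/d\, Leb_{\mathcal{F}^u_x}$ of the disintegration of Lebesgue along unstable plaques. For $y,z$ on a common long plaque,
\[
\frac{\rho_x(y)}{\rho_x(z)} \;=\; \prod_{n=1}^{\infty}\frac{J^u f(f^{-n}z)}{J^u f(f^{-n}y)},
\]
with convergence guaranteed by the uniform exponential contraction of $f^{-1}$ along $\mathcal{F}^u$ and the H\"older regularity of $\log J^u f$. The U.B.D.\ hypothesis $\rho_x\in[K^{-1},K]$ then forces
\[
\big|\Psi(x,y)\big|:=\Big|\sum_{n\ge 1}\big[\log J^u f(f^{-n}y)-\log J^u f(f^{-n}x)\big]\Big|\le 2\log K,
\]
uniformly for $x,y$ on a common long plaque. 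Since U.B.D.\ is uniform in the size of the foliated box and any compact unstable arc in $\mathbb{T}^3$ can be engulfed in a suitable long foliated box, this bound upgrades to any pair of points on the same unstable leaf.

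Next I exploit the cocycle identity
\[
\log J^u f^N(y)-\log J^u f^N(x) \;=\; \Psi(f^N x,f^N y)-\Psi(x,y),
\]
obtained by direct telescoping. Since $f^N x$ and $f^N y$ lie on a common unstable leaf, both terms on the right are bounded by $2\log K$, giving
\[
\big|\log J^u f^N(x)-\log J^u f^N(y)\big|\le 4\log K\qquad\text{for every }N\ge 0.
\]
Dividing by $N$ and passing to the limit (Oseledets/Birkhoff applies on a Lebesgue-full set, which intersects almost every unstable leaf in a full conditional-measure set thanks to U.B.D.) yields $\lambda^u(f,x)=\lambda^u(f,y)$ whenever $x$ and $y$ lie on the same unstable leaf. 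Applying the same reasoning to $f^{-1}$, whose unstable foliation is $\mathcal{F}^s$, gives the analogous leaf-constancy of $\lambda^s(f,\cdot)$ along $\mathcal{F}^s$.

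To pass from leaf-constancy to a single global a.e.\ constant, one combines the bounded-oscillation estimates on $\mathcal{F}^u$ and $\mathcal{F}^s$ via a Hopf chain (or invokes ergodicity of $m$, which in this setting is expected to follow from the two U.B.D.\ hypotheses together with accessibility). Call the resulting global values $c_u$ and $c_s$. To identify $c_u$ with $\lambda^u_A$, use the semiconjugacy $h\circ f = A\circ h$ (Franks--Manning in the Anosov case, Hammerlindl--Potrie in the partially hyperbolic one), whose lift $\tilde h$ sits at bounded distance from the identity on the universal cover: comparing exponential growth rates of unstable arcs of $f$ and of $A$ through $h$ gives $\int\lambda^u(f,\cdot)\,dm\ge\lambda^u_A$, while Pesin's entropy formula combined with $h_m(f)\le h_{top}(f)=h_{top}(A)$ provides the matching upper bound, so $c_u=\lambda^u_A$. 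Symmetrically $c_s=\lambda^s_A$, and volume preservation $\lambda^s(f,x)+\lambda^c(f,x)+\lambda^u(f,x)=0$ a.e., together with $\lambda^s_A+\lambda^c_A+\lambda^u_A=0$, forces $\lambda^c(f,\cdot)=\lambda^c_A$ a.e.

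The crux of the argument, and the step I expect to be most delicate, is not the Pesin--Sinai manipulation but the passage from leaf-constancy to a single global value: conservative partially hyperbolic diffeomorphisms of $\mathbb{T}^3$ are not known to be ergodic in general, so one must either verify that the two U.B.D.\ estimates (uniform oscillation $\le 4\log K$ for both $\log J^u f^N$ along $\mathcal{F}^u$ and $\log J^s f^N$ along $\mathcal{F}^s$) are strong enough to run a Hopf argument, or impose accessibility separately. Once global constancy is secured, identifying the constant with the linearization's exponent is a comparatively clean entropy-plus-volume-growth computation.
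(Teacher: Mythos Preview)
Your derivation of the uniform bound $|\log J^u f^N(x) - \log J^u f^N(y)| \leq 4\log K$ for all $y\in\mathcal{F}^u_x$ via the Pesin--Sinai density formula together with U.B.D.\ is exactly what the paper does. The divergence comes afterward, and your route has two genuine gaps.

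First, the passage from leaf-constancy to a single global constant via a Hopf chain requires accessibility, which is not among the hypotheses and is not known for arbitrary conservative partially hyperbolic diffeomorphisms of $\mathbb{T}^3$; you flag this yourself, but the paper does not need it. Second, and more seriously, your identification of the constant with $\lambda^u_A$ does not hold up as written. Hammerlindl's theorem provides a \emph{leaf conjugacy} (a homeomorphism taking center leaves of $f$ to center leaves of $A$, with lift at bounded distance from the identity), not a semiconjugacy $h\circ f=A\circ h$, so the growth-rate comparison of unstable arcs ``through $h$'' is not available in the form you invoke. And even granting $h_m(f)\le h_{top}(A)$, Pesin's formula controls only the sum $\int(\lambda^u+\lambda^c_+)\,dm$, not $\lambda^u$ alone, so the entropy step does not isolate $c_u$.

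The paper bypasses both issues by never invoking ergodicity, accessibility, or entropy. Once $J^u f^n(y)\ge K^{-2}J^u f^n(x)$ holds for every $y\in\mathcal{F}^u_x$, one argues by contradiction directly against the linearization: if $\{\lambda^u(f,\cdot)>\lambda^u_A\}$ had positive measure, fix a large $n$ and a point $x$ with $J^u f^n(x)\ge e^{n(\lambda^u_A+\varepsilon)}$; the same lower bound (up to $K^{-2}$) then propagates to the entire leaf $\mathcal{F}^u_x$. Taking $y\in\mathcal{F}^u_x$ far from $x$ on the universal cover, quasi-isometry of $\mathcal{F}^u$ plus integration of $J^u f^n$ over the arc give $\|f^n x-f^n y\|\gtrsim Q K^{-2}e^{n(\lambda^u_A+\varepsilon)}\|x-y\|$, while Hammerlindl's large-scale estimates (Propositions~\ref{H1}, \ref{H2} and Lemma~\ref{linalg}) force $\|f^n x-f^n y\|\approx\|A^n x-A^n y\|\approx e^{n\lambda^u_A}\|x-y\|$. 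For $n$ large this is a contradiction. The same argument for $f^{-1}$ rules out $\lambda^u(f,\cdot)<\lambda^u_A$, giving $\lambda^u(f,\cdot)=\lambda^u_A$ a.e.\ with no detour through global constancy; $\lambda^s$ follows symmetrically and $\lambda^c$ by volume preservation.
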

\begin{remark}
In the above theorem if we just assume the U.B.D property of one of the foliations $\mathcal{F}^{s}$ or $\mathcal{F}^{u},$ we conclude the rigidity of the corresponding Lyapunov exponent. In the above theorem the rigidity of central Lyapunov exponent is just a corollary of volume preserving property of $f.$ However, the same rigidity for central foliation also holds if we assume $\mathcal{F}^c$ has U.B.D property. As we do not have a good description for the desintegration along the central leaves, the proof for the central exponent rigidity is different from the stable and unstable foliation cases and it appears in the proof of theorem \ref{corollary1}.
\end{remark}

The above result show that U.B.D property imposes restrictions on the dynamics in the level of Lyapunov exponents.
We conjecture that U.B.D property is equivalent to $C^1$ conjugacy with linear automorphisms. For Anosov diffeomorphisms we can check this conjecture.
\begin{theorem} \label{thmrigidity} Let $f: \mathbb{T}^3 \rightarrow \mathbb{T}^3$ be a $C^2,$ conservative Anosov diffeomorphism with partially hyperbolic structure $E^{uu} \oplus E^u \oplus E^s.$ If $\mathcal{F}^u_f$ has uniform bounded density property, then $f$ is $C^{1 + \theta}$ conjugated to its linearization $A: \mathbb{T}^3 \rightarrow \mathbb{T}^3,$ for some positive $\theta,$ up to change $f$ by $f^2.$
\end{theorem}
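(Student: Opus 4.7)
The plan is to promote the U.B.D.\ hypothesis into matching of the weak-unstable Lyapunov exponent at every periodic orbit, and then to invoke a rigidity theorem for 3-dimensional Anosov diffeomorphisms on $\mathbb{T}^3$.

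Viewing the partially hyperbolic structure as $E^s \oplus E^u \oplus E^{uu}$ (so that $\mathcal{F}^u$ plays the role of the center foliation), Theorem~\ref{thmrigidexponents} together with ergodicity of conservative Anosov dynamics gives $\lambda^u_f(x) = \lambda^u_A$ for Lebesgue-a.e.\ $x$. To upgrade this to periodic matching, fix a periodic point $p$ of period $N$, take a long plaque $P \subset \mathcal{F}^u(p)$ through $p$ of length $L_P$, and place $f^{kN}P$ in a long foliated box of length $L_{f^{kN}P}$ (permissible since U.B.D.\ holds for plaques of arbitrary size). By $f$-invariance of Lebesgue and Rokhlin disintegration, the densities $g_P$, $g_{f^{kN}P}$ of the conditional measures against arc length satisfy the change-of-variable identity $g_{f^{kN}P}(f^{kN} t) = g_P(t)/J^u f^{kN}(t)$. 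The U.B.D.\ bounds $g_P \in [1/(KL_P),\ K/L_P]$ and $g_{f^{kN}P} \in [1/(KL_{f^{kN}P}),\ K/L_{f^{kN}P}]$ combine to give
$$J^u f^{kN}(t) \in \bigl[\,L_{f^{kN}P}/(K^2 L_P),\ K^2 L_{f^{kN}P}/L_P\,\bigr], \qquad t \in P,$$
so that $\log J^u f^{kN}$ oscillates on $P$ by at most $4\log K$. Evaluating at $t = p$, where $\tfrac{1}{kN}\log J^u f^{kN}(p) = \lambda^u_f(p)$ holds exactly, and at a Birkhoff-generic point $t' \in P$ (whose existence on the specific leaf through $p$ is guaranteed by the leaf-Lebesgue absolute continuity implied by U.B.D., combined with the first step), we obtain $|\lambda^u_f(p) - \lambda^u_A| \le 4\log K/(kN) + o(1)$. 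Letting $k \to \infty$ yields $\lambda^u_f(p) = \lambda^u_A$ for every periodic $p$.

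With matching of the weak-unstable periodic data in hand, Liv\v{s}ic's theorem for the Anosov $f$ yields a H\"older coboundary representation $\log J^u f = \lambda^u_A + \psi\circ f - \psi$, H\"older-linearizing the action of $Df$ along $E^u$. Moreover, volume preservation together with the entropy equality $h_{\mathrm{top}}(f)=h_{\mathrm{top}}(A)$ (from the Franks--Manning topological conjugacy) and Pesin's entropy formula force, via Margulis--Ruelle inequalities applied to $f$ and to $f^{-1}$, the identities $\lambda^{uu}_f = \lambda^{uu}_A$ and $\lambda^s_f = \lambda^s_A$ almost everywhere as well. Gogolev's rigidity theorem for 3-dimensional partially hyperbolic Anosov diffeomorphisms on $\mathbb{T}^3$ then promotes this periodic-plus-ergodic data matching to a $C^{1+\theta}$ conjugacy with $A$, possibly after replacing $f$ by $f^2$ to restore orientability of the 1-dimensional invariant subbundles.

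The main obstacle lies in the density-comparison argument of the second paragraph: the plaques $P$ and $f^{kN}P$ live in genuinely distinct foliated boxes whose lengths are related by the factor $\approx e^{kN\lambda^u_f(p)}$, so the estimate critically exploits the uniformity of U.B.D.\ with respect to plaque size. A secondary technical point, not routine despite its familiar appearance, is the existence of Birkhoff-generic points on the specific weak-unstable leaf through the periodic $p$; it is resolved using the Pesin--Sinai absolute continuity implied by the U.B.D.\ assumption.
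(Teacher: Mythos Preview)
Your density-comparison argument in the second paragraph is sound and amounts to a direct derivation of the bound $K^{-2}\le J^{u}f^{n}(t)/J^{u}f^{n}(t')\le K^{2}$ for $t,t'$ on a common $\mathcal{F}^{u}$-leaf; this is exactly the content of the paper's Lemma~\ref{defini��o}, which obtains it instead via the Pesin--Sinai density formula. The issue you flag about Birkhoff-generic points on the specific leaf through $p$ is real, and your proposed resolution via absolute continuity does not obviously work (the leaf through $p$ is a measure-zero set and need not lie in the full-measure family of ``good'' leaves). The cleaner route, which the paper takes, is to observe that your oscillation bound on a dense leaf already feeds directly into Proposition~\ref{pop2} to give constancy of the $E^{u}$-periodic data, with no appeal to generic points; the Anosov closing lemma together with Proposition~\ref{pop1} then identifies the common value as $\lambda^{u}_{A}$.

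The substantive gap is in your third paragraph. The entropy argument (variational principle, $h_{\mathrm{top}}(f)=h_{\mathrm{top}}(A)$, Pesin's formula for the smooth measure $m$, and volume preservation) does correctly force $\lambda^{uu}_{f}=\lambda^{uu}_{A}$ and $\lambda^{s}_{f}=\lambda^{s}_{A}$ \emph{Lebesgue-almost everywhere}. But it yields nothing at periodic orbits: for the atomic measure $\mu_{p}$ supported on the orbit of $p$ one has $h_{\mu_{p}}(f)=0$, so neither Ruelle's inequality nor the variational principle constrain $\lambda^{uu}(p)$, and there is no general passage from a.e.\ Lyapunov data to periodic data. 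The Gogolev--Guysinsky theorem you invoke at the end requires the \emph{full periodic data} of $f$ and $A$ to coincide, so you are missing precisely the matching of $\lambda^{uu}(p)$ (and then, by volume preservation at $p$, of $\lambda^{s}(p)$) for every periodic $p$.

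This is exactly where the paper does nontrivial work, and it is a geometric rather than an entropy argument. Having made the Jacobian in one expanding direction cohomologous to a constant via Liv\v{s}ic, the paper upgrades the a priori ``bi-Lipschitz for distant points'' property of the weak-unstable holonomy between strong-unstable leaves (Proposition~\ref{distantlip}) to a uniform bi-Lipschitz bound at \emph{all} scales. A second lemma then identifies this Lipschitz constant, up to bounded multiplicative error, with the infinite product $\Delta$ along the other expanding direction, by disintegrating the induced Lebesgue measure on a center-unstable leaf over thin rectangles. This gives $\Delta(f^{n}x,f^{n}y)=O(1)$ uniformly on a dense leaf, and Proposition~\ref{pop2} again delivers constancy of the periodic data in the remaining expanding direction. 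Your entropy shortcut does not substitute for this holonomy argument.
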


The above theorems assume U.B.D property and conclude some rigidity of Lyapunov exponents.  We should mention that even leafwise absolute continuity imposes some restrictions on the Lyapunov exponents, as we see in the following theorem. Recall that stable and unstable foliation of any $C^2-$ partially hyperbolic diffeomorphism are leafwise absolutely continuous (\cite{Brpe}).

\begin{theorem} \label{thm1} Let $f$ be a $C^2$ conservative partially hyperbolic diffeomorphism on the $3-$torus and $A$ its linearization then

$$ \lambda^u(f,x) \leq \lambda^u(A) \; \mbox{and} \;\; \lambda^s(f,x) \geq \lambda^s(A)\; \mbox{for almost everywhere } \; x \in \mathbb{T}^3.$$
\end{theorem}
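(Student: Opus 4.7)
The plan is to compare, in the universal cover $\mathbb{R}^3$, the leafwise stretching of the $f$-unstable foliation with the linear expansion of $A$, via the Franks--Manning semiconjugacy. Absolute partial hyperbolicity on $\mathbb{T}^3$ forces the linearization $A$ to be hyperbolic, so there exists a continuous surjection $H:\mathbb{T}^3\to\mathbb{T}^3$ with $H\circ f=A\circ H$, lifting to $\tilde H:\mathbb{R}^3\to\mathbb{R}^3$ with $\sup_p|\tilde H(p)-p|\le C_0$ for some $C_0>0$. I will use as black boxes two known geometric facts for partially hyperbolic systems on $\mathbb{T}^3$ (Brin--Burago--Ivanov, Hammerlindl, Potrie): (i) the lift $\widetilde{\mathcal F}^u$ of $\mathcal F^u_f$ is \emph{quasi-isometric} in $\mathbb{R}^3$, i.e.\ there is $Q>0$ such that $d_{\widetilde{\mathcal F}^u}(\tilde x,\tilde y)\le Q|\tilde x-\tilde y|+Q$ whenever $\tilde x,\tilde y$ lie on a common $f$-unstable leaf; and (ii) $\tilde H$ maps each $\widetilde{\mathcal F}^u$-leaf into a single unstable leaf of $A$, which is an affine line in $\mathbb{R}^3$ parallel to the unstable eigenvector $v^u_A$.

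Now fix a Lyapunov regular point $x\in\mathbb{T}^3$, lift it to $\tilde x\in\mathbb{R}^3$, and pick $\tilde y$ on the $f$-unstable leaf of $\tilde x$ at small but fixed leaf distance $L_0>0$. Since $f$ is $C^2$, bounded distortion along unstable manifolds gives a constant $K=K(L_0)>0$ with
$$d_{\widetilde{\mathcal F}^u}(\tilde f^n\tilde x,\tilde f^n\tilde y)\ \ge\ K^{-1}L_0\,\|Df^n(x)|E^u_f\|,\qquad n\ge 0.$$
Oseledets gives $\tfrac1n\log\|Df^n(x)|E^u_f\|\to\lambda^u(f,x)$, and (i) converts this leaf bound into a lower bound on Euclidean distances:
$$\liminf_{n\to\infty}\tfrac1n\log|\tilde f^n\tilde x-\tilde f^n\tilde y|\ \ge\ \lambda^u(f,x).$$

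For the matching upper bound, (ii) gives $\tilde H(\tilde y)-\tilde H(\tilde x)=s\,v^u_A$ for some $s\in\mathbb{R}$. Using $\tilde H\circ\tilde f^n=A^n\circ\tilde H$ together with $|\tilde H-\mathrm{id}|\le C_0$,
$$|\tilde f^n\tilde x-\tilde f^n\tilde y|\ \le\ |A^n\tilde H(\tilde x)-A^n\tilde H(\tilde y)|+2C_0\ =\ |s|\,e^{n\lambda^u(A)}+2C_0,$$
so $\limsup_{n\to\infty}\tfrac1n\log|\tilde f^n\tilde x-\tilde f^n\tilde y|\le\lambda^u(A)$. Combining with the previous display yields $\lambda^u(f,x)\le\lambda^u(A)$ almost everywhere. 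Applying the identical argument to $f^{-1}$ (whose linearization is $A^{-1}$, whose unstable foliation is $\mathcal F^s_f$, and which remains $C^2$ conservative partially hyperbolic), and using the identities $\lambda^u(f^{-1},\cdot)=-\lambda^s(f,\cdot)$, $\lambda^u(A^{-1})=-\lambda^s(A)$, then gives the companion inequality $\lambda^s(f,x)\ge\lambda^s(A)$.

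The main delicate point is the geometric input in the first paragraph: one really does need $\tilde H$ to respect the strong unstable foliation (otherwise the image separation might pick up the full Anosov spectrum of $A$ rather than only $\lambda^u(A)$) and quasi-isometry of $\mathcal F^u_f$ (so that leafwise distance growth is visible in the ambient Euclidean distance). Both are known in the absolute partially hyperbolic setting on $\mathbb{T}^3$, and once they are granted, the rest of the argument is just Oseledets together with bounded distortion along unstable manifolds. Note that the leafwise absolute continuity hypothesis highlighted above the theorem enters only implicitly, through its role in guaranteeing the bounded distortion estimate; no use is made of the densities of the disintegration beyond boundedness.
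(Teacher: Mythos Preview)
Your overall strategy---compare leafwise growth of $\mathcal{F}^u_f$ in the universal cover to linear growth along $E^u_A$---is the same as the paper's, but the implementation has two genuine gaps.

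\textbf{The linearization need not be hyperbolic.} Your opening claim that absolute partial hyperbolicity on $\mathbb{T}^3$ forces $A=f_*$ to be Anosov is false: take $A=\begin{pmatrix}2&1&0\\1&1&0\\0&0&1\end{pmatrix}$, which is partially hyperbolic with center eigenvalue $1$, and $f=A$ (or any small perturbation) is a conservative partially hyperbolic diffeomorphism with this non-hyperbolic linearization. When $A$ has an eigenvalue of modulus one there is no Franks--Manning semiconjugacy $H$ at bounded distance from the identity, so your black box (ii) collapses and your upper bound $|\tilde f^n\tilde x-\tilde f^n\tilde y|\le |s|e^{n\lambda^u(A)}+2C_0$ is unavailable. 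The paper avoids this by never invoking a semiconjugacy: it uses Hammerlindl's purely geometric Propositions~\ref{H1} and~\ref{H2} (leaves of $\widetilde{\mathcal F}^u_f$ are uniformly asymptotic to $E^u_A$, and $\|f^kx-f^ky\|/\|A^kx-A^ky\|$ is close to $1$ for far-apart points), together with the linear-algebra Lemma~\ref{linalg}, to obtain inductively $\|f^nx-f^ny\|\le (1+\varepsilon)^{2n}e^{n\lambda^u_A}\|x-y\|$. These inputs hold regardless of whether $A$ is Anosov.

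\textbf{Forward bounded distortion fails.} Your inequality $d_{\widetilde{\mathcal F}^u}(\tilde f^n\tilde x,\tilde f^n\tilde y)\ge K^{-1}L_0\|Df^n(x)|_{E^u}\|$ with $K$ independent of $n$ is not a consequence of $C^2$ bounded distortion. Standard distortion along $\mathcal F^u$ bounds $\log\bigl(J^uf^n(x)/J^uf^n(z)\bigr)$ in terms of the \emph{image} distance $d_u(f^nx,f^nz)^\alpha$, not the initial distance $d_u(x,z)^\alpha$; since the image arc grows, the ratio is not uniformly controlled. Indeed, if $x$ is a periodic point whose unstable eigenvalue exceeds $e^{\lambda^u_A}$, your claimed lower bound would contradict your own upper bound. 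The paper handles the lower bound differently: it argues by contradiction, uses leafwise absolute continuity of $\mathcal F^u$ to find an unstable arc $[x,y]_u$ meeting the bad set $Z=\{\lambda^u(f,\cdot)>\lambda^u_A\}$ in positive leaf measure, passes to the uniformity sets $A_n\uparrow Z$, and bounds $\|f^nx-f^ny\|$ from below by $Q\int_{[x,y]_u\cap A_n}\|D^uf^n\|\,d\lambda$, which is $\gtrsim\alpha_0(1+2\varepsilon)^{2n}e^{n\lambda^u_A}$ by construction. This yields the contradiction with the inductive upper bound without any forward distortion estimate.

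In short, your argument is correct in the special case where $A$ is Anosov (derived-from-Anosov setting) once the distortion step is repaired, but to cover the full statement you must replace the semiconjugacy by Hammerlindl's asymptotic estimates and replace the pointwise distortion bound by the paper's absolute-continuity-plus-integration argument over the sets $A_n$.
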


Similar to the statement of the above theorem appears in  \cite{RHRHU2} and proved in \cite{SX08}  for $f$ $C^1$-close to $A.$ In \cite{SX08}, the authors need unique homological data for the strong unstable foliation and they prove that it is the case when $f$ is closed to its linearization.

\begin{corollary}
Any conservative linear partially hyperbolic diffeomorphism is a local maximum  point for $$f \mapsto \displaystyle \int \lambda^u(f) dm.$$ Analogously any conservative linear partially hyperbolic diffeomorphism is a local minimum  point for $f \mapsto \displaystyle \int \lambda^s(f) dm.$
\end{corollary}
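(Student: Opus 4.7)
The plan is to deduce the corollary directly from Theorem \ref{thm1} by exploiting the fact that the linearization is locally constant on the space of diffeomorphisms. The key observation is that $f_*$ depends only on the induced isomorphism of $\pi_1(\mathbb{T}^3) \cong \mathbb{Z}^3$; since this isomorphism is a discrete invariant, it is unchanged under $C^0$-small perturbations. Together with the fact that partial hyperbolicity is an open condition in the $C^1$ topology, this means that if $f_0$ is any conservative linear partially hyperbolic diffeomorphism, there is a neighborhood $\mathcal{U}$ of $f_0$ in the space of conservative $C^2$ diffeomorphisms such that every $f\in\mathcal{U}$ is partially hyperbolic and satisfies $f_* = (f_0)_* = f_0$.

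Next, observe that for the linear map $f_0$ itself, the Lyapunov exponents are constants determined by the eigenvalues of the defining matrix; in particular $\lambda^u(f_0,x) = \lambda^u(f_0)$ everywhere, so $\int \lambda^u(f_0)\, dm = \lambda^u(f_0)$, and analogously for the stable exponent.

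Now I would apply Theorem \ref{thm1} to an arbitrary $f \in \mathcal{U}$. Since $f_* = f_0$, the theorem yields $\lambda^u(f,x) \leq \lambda^u(f_*) = \lambda^u(f_0)$ for $m$-a.e.\ $x$. Integrating against $m$ gives
\[
\int \lambda^u(f)\, dm \;\leq\; \lambda^u(f_0) \;=\; \int \lambda^u(f_0)\, dm,
\]
which is exactly the local maximum statement. The argument for $\lambda^s$ is identical with the reversed inequality coming from the second half of Theorem \ref{thm1}.

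I do not anticipate any real obstacle: the corollary is essentially an immediate packaging of Theorem \ref{thm1} with two elementary facts (discreteness of the linearization on $\pi_1$ and $C^1$-openness of partial hyperbolicity). The only minor point to make explicit is the choice of ambient topology on the space of perturbations so that "local maximum" is meaningful; the natural choice is the $C^2$ topology on conservative diffeomorphisms, which suffices for both the applicability of Theorem \ref{thm1} and the constancy of the linearization.
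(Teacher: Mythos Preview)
Your proposal is correct and matches the paper's intent: the corollary is stated immediately after Theorem~\ref{thm1} with no separate proof, and your argument spells out exactly the implicit reasoning (local constancy of $f_*$, openness of partial hyperbolicity, then integrate the pointwise inequality from Theorem~\ref{thm1}).
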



\begin{problem}
Classify the local maximum points of unstable Lyapunov exponent.  Are these diffeomorphisms  $C^1$ conjugated to linear?
\end{problem}
\begin{problem}
Suppose that $\lambda^{c}(f) > 0$ and $\mathcal{F}^c$ is upper leafwise absolutely continuous then $\lambda^{c}(f) \leq \lambda^{c}(A).$
\end{problem}

Another interesting issue in the setting of  partially hyperbolic diffeomorphisms is the characterization of topological type of central leaves.
 It is clear that for a general partially hyperbolic diffeomorhism (general $3-$manifolds) with one dimensional central bundle, the leaves of central foliation may be circles, line or both of them (consider suspension of an Anosov diffeomorphism of $\mathbb{T}^2$). However by Hammerlindl's result \cite{H}, central leaves of a partially hyperbolic diffeomorphism on $\mathbb{T}^3$ are homeomorphic to central leaves of its linearization and consequently all the leaves have the same topological type. A very natural question is that

\begin{question}
Suppose $f$ is volume preserving (absolute) partially hyperbolic on $\mathbb{T}^3$ and central Lyapunov exponent vanishes almost sure. Is it true that all center leaves are compact?
\end{question}
In general setting, this question has been answered negatively in \cite{PT}.
We would like to mention that by a recent result of Hammerlindl and Ures, a non-ergodic derived from Anosov diffeomorphism on $\mathbb{T}^3$, if exists, will have zero central Lyapunov exponent and non-compact central leaves. It is interesting to know whether exists example of such partially hyperbolic non-ergodic diffeomorphisms on torus.

Assuming U.B.D property  of central foliation we get the following theorem which gives an affirmative answer to the above question.

\begin{theorem} \label{corollary1}  Let  $f: \mathbb{T}^3 \rightarrow \mathbb{T}^3,$ be  a conservative  partially hyperbolic diffeomorphism. Suppose that $\mathcal{F}^c $ has the uniform bounded density property and $\lambda^c_f = 0$ for a.e. $x \in \mathbb{T}^3,$ then the center leaves are circles.
\end{theorem}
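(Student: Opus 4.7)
My plan is to reduce the theorem to showing $\lambda^c_A = 0$, where $A = f_\ast \in GL(3,\mathbb{Z})$ is the linearization. By the cited result of Hammerlindl \cite{H}, the center leaves of $f$ are homeomorphic to those of $A$, so it is enough to show that the center leaves of $A$ are circles. Since $\det A = \pm 1$, any rational eigenvalue of $A$ is an integer dividing $\pm 1$, hence $\pm 1$; conversely, if $\lambda_c(A) = \pm 1$ then the center eigenspace is $\ker(A \mp I)$, a rational one-dimensional subspace projecting to a closed circle in $\mathbb{T}^3$. Thus center leaves of $A$ are circles iff $\lambda^c_A = \log|\lambda_c(A)| = 0$.

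\noindent\textbf{A cocycle identity from U.B.D.} I would then extract from U.B.D. a uniform bound on the central derivative cocycle along a single leaf. Pick a measurable, per-leaf-normalized representative $\rho:\mathbb{T}^3 \to (0,\infty)$ of the Rokhlin disintegration class of $m$ along $\mathcal{F}^c$. Because U.B.D. holds for arbitrarily long foliated boxes, the oscillation of $\log\rho$ on each leaf is at most $2\log K$. The $f$-invariance of $m$ translates into the pointwise identity
\[
\log\|Df|E^c(y)\| = \log\rho(y) - \log\rho(f(y)) - \log\alpha(\mathcal{F}^c_y),
\]
where $\alpha(\ell)$ is a scalar depending only on the leaf $\ell$. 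Telescoping along an orbit and comparing two points $y,y'$ on the same leaf (so the $\alpha$-contributions cancel) produces the uniform-in-$n$ bound
\[
\bigl|\log\|Df^n|E^c(y)\| - \log\|Df^n|E^c(y')\|\bigr| \le 4\log K.
\]

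\noindent\textbf{Contradiction via the semiconjugacy.} Suppose for contradiction $\lambda^c_A \neq 0$; replacing $(f,A)$ by $(f^{-1},A^{-1})$ if necessary I may take $\lambda^c_A > 0$. Then the center eigenvector $v_c$ of $A$ is irrational, the center leaves of the lift $\widetilde A$ are affine lines in direction $v_c$, and the Franks-type semiconjugacy $\widetilde h$ lies at bounded distance $C$ from the identity and is a homeomorphism on every center leaf of $\widetilde f$ by Hammerlindl. Choose $\widetilde x \neq \widetilde y$ on a common center leaf of $\widetilde f$ with $\lambda^c(f,\widetilde x) = 0$ (such pairs exist by the hypothesis and Fubini). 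The cocycle bound gives, for every $\epsilon > 0$ and all large $n$,
\[
d_\ell(\widetilde f^n\widetilde x, \widetilde f^n\widetilde y) = \int_{[\widetilde x,\widetilde y]_\ell}\|Df^n|E^c\|\,ds \le K^4\,|\widetilde y - \widetilde x|_\ell\,e^{n\epsilon}.
\]
Writing $\widetilde h(\widetilde x) - \widetilde h(\widetilde y) = t\,v_c$ with $t\neq 0$, the intertwining identity $\widetilde A\widetilde h = \widetilde h\widetilde f$ yields
\[
|t|\,e^{n\lambda^c_A}\,|v_c| = |\widetilde h(\widetilde f^n\widetilde x) - \widetilde h(\widetilde f^n\widetilde y)| \le d_\ell(\widetilde f^n\widetilde x, \widetilde f^n\widetilde y) + 2C.
\]
Choosing $\epsilon < \lambda^c_A$ contradicts the previous estimate as $n \to \infty$. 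Hence $\lambda^c_A = 0$, and the first paragraph closes the proof.

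\noindent\textbf{Main obstacle.} The hardest step is the cocycle paragraph: converting the measure-theoretic U.B.D. hypothesis into a pointwise, uniform-in-$n$ oscillation bound on the central derivative cocycle along each leaf. Handling the disintegration class on possibly non-compact leaves, selecting a globally measurable representative $\rho$, and invoking U.B.D. for arbitrarily long foliated boxes to spread the bounded-density estimate across the whole leaf are the delicate points. Once this is done, Hammerlindl's leaf-injectivity together with the linear expansion of $\widetilde A$ along $v_c$ make the semiconjugacy contradiction essentially automatic.
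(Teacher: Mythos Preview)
Your overall strategy is correct and yields a valid proof, but it diverges from the paper's in an interesting way. Both arguments reduce the theorem to showing $\lambda^c_A=0$ (equivalently $\lambda^c(f,\cdot)=\lambda^c_A$ a.e.) and then finish via Hammerlindl's leaf conjugacy. The difference is in how that equality of center exponents is obtained. The paper explicitly says that, unlike the unstable case, ``we do not have a good description for the disintegration along the central leaves,'' and so it does \emph{not} attempt your density--transformation identity. Instead it works in the universal cover, builds one long foliated box $\hat D=\bigcup_{z\in D}\mathcal{F}^c_{z,M}$ over a transversal $D$, uses a counting-of-fundamental-domains argument to see that $\hat D$ carries a definite proportion of $A_n=\{x:\|Df^m|E^c\|>e^{m(\lambda^c_A+\varepsilon)}\ \forall m\ge n\}$, applies Rokhlin inside $\hat D$ together with U.B.D.\ to locate a single long plaque with $Leb_z(A_n)\ge\alpha/2K$, and then contradicts Propositions~\ref{H1}--\ref{H2} by integrating $\|Df^n|E^c\|$ over that plaque, exactly as in the proof of Proposition~\ref{pop1}. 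Your route instead turns U.B.D.\ directly into the uniform bound $K^{-4}\le \|Df^n|E^c(y)\|/\|Df^n|E^c(y')\|\le K^4$ for $y,y'$ on the same center leaf --- this is precisely the center analogue of Lemma~\ref{defini��o}, obtained from the transformation rule for conditional densities rather than from the Pesin--Sinai--Ledrappier product. What you gain is a cleaner intermediate statement and avoidance of the somewhat ad hoc long-box construction; what you pay is the careful measure-theoretic justification of the density identity, which you correctly flag as the main obstacle.

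One correction worth making: you say the Franks-type semiconjugacy ``is a homeomorphism on every center leaf of $\widetilde f$ by Hammerlindl.'' Theorem~\ref{H3} is about the \emph{leaf conjugacy}, which is a homeomorphism but does not intertwine $f$ and $A$; the Franks semiconjugacy you actually use need not be injective. This does not damage your argument, since you only need the $E^c_A$-component of $\widetilde h(\widetilde x)-\widetilde h(\widetilde y)$ to be nonzero. That follows by choosing $\widetilde y$ far from $\widetilde x$ along the leaf and combining $\|\widetilde h-\mathrm{id}\|\le C$ with Proposition~\ref{H1}, so the contradiction goes through without invoking injectivity on center leaves.
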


\begin{remark}
In the proof of the above theorem we show that under U.B.D condition of central foliation one concludes that $\lambda^c(f) = \lambda^c(f_*), a.e.$
\end{remark}

We do not know whether the above theorem holds just assuming leafwise absolute continuity.

\section{ Preliminaries}

In this section we review some definitions and  known results about partially hyperbolic diffeomorphisms on $\mathbb{T}^3.$
\subsection{Partially hyperbolic diffeomorphisms on $\mathbb{T}^3$}
In the rest of the preliminaries section we will recall some nice topological properties of invariant foliations of partially hyperbolic diffeomorphisms on 3-torus.
One of the key properties of the invariant foliations of partially hyperbolic diffeomorphisms in 3-torus is their quasi-isometric property. Quasi isometric foliation $W$ of $\mathbb{R}^d$ means that the leaves do not fold back on themselves much.

%

\begin{definition} \label{quasi isometric}
A foliation $W$  is quasi-isometric if there exist positive constant $Q$
such that for all $x, y$ in a common leaf of W we have
$$d_W(x, y) \leq Q^{-1} || x - y||.$$
Here $d_W$ denotes the riemannian metric on $W$ and $\|x-y\|$ is the distance on the ambient manifold of the foliation.
\end{definition}
%
%

In the partially hyperbolic case, we denote by $d^{\sigma}(\cdot, \cdot), $ the riemannian metric on $\mathcal{F}^{\sigma}, \sigma \in \{s,c,u\}.$ We define $d^c(\cdot, \cdot)$  in the dynamical coherent case. The foliation $\mathcal{F}^{\sigma}$ is called quasi isometric if its lift to the universal covering ($\mathbb{R}^3$) is quasi isometric.

The approach that we will use to prove the main theorem leads with concept of leaf conjugacy. For this we use the Hammerlindl results concerning leaf conjugacy.

\begin{theorem} [\cite{Br-Bu-Iv2}, \cite{H}]
If $f: \mathbb{T}^3 \rightarrow \mathbb{T}^3$ is partially hyperbolic diffeomorphism, then $\mathcal{\mathcal{F}}^{\sigma}, \sigma \in \{s,c,u\}$ are quasi isometric
foliations.
\end{theorem}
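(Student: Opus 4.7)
The plan is to lift to the universal cover $\pi\colon\mathbb{R}^3\to\mathbb{T}^3$, work with the lift $\tilde f$ and the lifted foliations $\widetilde{\mathcal{F}}^{\sigma}$, and handle the strong foliations first; the center case will then follow from the already-proven quasi-isometry of $\widetilde{\mathcal{F}}^{s}$ and $\widetilde{\mathcal{F}}^{u}$ together with the integrability of $E^c$ due to Brin--Burago--Ivanov.

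For $\widetilde{\mathcal{F}}^{u}$ I would argue by contradiction. The bundles $\widetilde{E}^{\sigma}$ are $\mathbb{Z}^3$-invariant, uniformly continuous and uniformly transverse, so there exists $\delta_0>0$ such that every $C^1$ arc $\gamma$ tangent to a narrow $\widetilde{E}^{u}$-cone of intrinsic length at most $1$ satisfies $\diam(\gamma)\geq\delta_0\cdot\text{length}(\gamma)$ in the Euclidean norm. Suppose sequences $x_n,y_n$ lie on a common unstable leaf with $L_n:=d_u(x_n,y_n)\to\infty$ and $\|x_n-y_n\|/L_n\to 0$. Pick $k_n$ so that $d_u(\tilde f^{-k_n}x_n,\tilde f^{-k_n}y_n)\in[1/2,1]$, possible since $\tilde f^{-1}$ contracts unstable arc-length at rate at least $\lambda_-^{-1}<1$; the cone estimate then gives $\|\tilde f^{-k_n}x_n-\tilde f^{-k_n}y_n\|\geq\delta_0/2$. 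On the other hand, $\tilde f$ is at bounded $C^0$-distance from its linearization $A$, which propagates to the iterate with a polynomial error, so one compares $\|\tilde f^{-k_n}x_n-\tilde f^{-k_n}y_n\|$ to $\|A^{-k_n}(x_n-y_n)\|$. Decomposing $x_n-y_n$ along the eigenspaces of $A$, using $\|x_n-y_n\|=o(L_n)$ and $k_n=O(\log L_n)$, the resulting upper bound tends to $0$, contradicting $\delta_0/2$. The same scheme applied to $\tilde f^{-1}$ handles $\widetilde{\mathcal{F}}^{s}$.

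For $\widetilde{\mathcal{F}}^{c}$, I would use that on $\mathbb{T}^3$ the center bundle integrates and in fact one has invariant foliations $\widetilde{\mathcal{F}}^{cs}$ and $\widetilde{\mathcal{F}}^{cu}$ by topological planes. The quasi-isometry of $\widetilde{\mathcal{F}}^{s}$ just established implies that each $cs$-leaf stays within bounded Hausdorff distance of an affine plane parallel to $E^s_A\oplus E^c_A$, and symmetrically for $cu$-leaves. A center leaf is a connected component of $\widetilde{\mathcal{F}}^{cs}(x)\cap\widetilde{\mathcal{F}}^{cu}(x)$ and is therefore trapped in a bounded tube around $x+E^c_A$; this is readily seen to yield the quasi-isometric inequality.

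I expect the main obstacle to be the center foliation. Along center leaves there is no hyperbolic contraction or expansion to drive the scheme used for $\widetilde{\mathcal{F}}^{u}$, so one must rely entirely on the global topological rigidity of $\mathbb{T}^3$ together with the quasi-isometry of the strong foliations. The crucial input that a center leaf is a \emph{connected} component of the intersection of $cs$- and $cu$-leaves (rather than several disjoint pieces) is essentially Hammerlindl's leaf-conjugacy result, itself a substantial theorem. A secondary technical issue in the strong case is carefully controlling the error in the comparison $\|\tilde f^{-k}-A^{-k}\|_{C^0}$, which requires telescoping along the orbit since $\tilde f-A$ is only globally bounded rather than small.
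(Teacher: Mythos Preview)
The paper does not prove this theorem. It appears in the preliminaries section as a result quoted from Brin--Burago--Ivanov \cite{Br-Bu-Iv2} and Hammerlindl \cite{H} and is used throughout the paper as a black box (for instance in the proofs of Theorems \ref{thmrigidexponents}, \ref{thm1} and Proposition \ref{distantlip}). There is therefore no proof in the paper to compare your proposal against.

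That said, a word of caution about your sketch for $\widetilde{\mathcal{F}}^{u}$. The step ``compare $\|\tilde f^{-k_n}x_n-\tilde f^{-k_n}y_n\|$ to $\|A^{-k_n}(x_n-y_n)\|$'' is more delicate than you suggest: the telescoping error $\|\tilde f^{-k}-A^{-k}\|_{C^0}$ grows like $\|A^{-1}\|^{k}$, which is exponential in $k$ (hence polynomial in $L_n$, but of uncontrolled degree), and the eigenspace decomposition of $x_n-y_n$ gives you no a priori control on its $E^s_A$- and $E^c_A$-components beyond $o(L_n)$. Without already knowing that $x_n-y_n$ is nearly parallel to $E^u_A$ (which is precisely the content of Proposition \ref{H1}, itself a consequence of quasi-isometry in Hammerlindl's work), the product $\|A^{-k_n}\|\cdot o(L_n)$ need not tend to zero. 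The published proofs in \cite{Br-Bu-Iv2} avoid this circularity by a different, volume-growth type argument; your center-leaf argument, on the other hand, is essentially the one Hammerlindl uses once the strong cases are in hand.
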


 \begin{proposition}[\cite{H}] \label{H1} Let $f : \mathbb{T}^3 \rightarrow \mathbb{T}^3$ be a partially hyperbolic diffeomorphism and $A: \mathbb{T}^3 \rightarrow \mathbb{T}^3$ the linearization of $f.$ Then

 $$ \lim_{||y - x || \rightarrow +\infty} \frac{y-x}{||y - x ||} = E_A^{\sigma}, \;\;  y \in \mathcal{F}^{\sigma}_x, \sigma  \in \{s, c, u\}$$

and the convergence is uniform.
\end{proposition}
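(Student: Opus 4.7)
The plan is to argue in the universal cover $\mathbb{R}^3$. Lift $f$ to $\tilde f$ with the property that $\tilde f$ induces the same automorphism on $\mathbb{Z}^3 = \pi_1(\mathbb{T}^3)$ as $A$ does; then $h(z) := \tilde f(z) - Az$ is $\mathbb{Z}^3$-periodic, hence bounded (by some $K_0$) and globally Lipschitz on $\mathbb{R}^3$. Decompose $\mathbb{R}^3 = E_A^u \oplus E_A^c \oplus E_A^s$ with projections $P^\sigma$ and eigenvalue moduli $|\alpha^s|<|\alpha^c|<|\alpha^u|$. I will focus on $\sigma = u$; the case $\sigma = s$ is the same argument applied to $f^{-1}$, and $\sigma = c$ I address at the end.

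Fix $y \in \tilde{\mathcal{F}}^u_x$, set $R = \|y - x\|$, and put $y_j = \tilde f^{-j}(y)$, $x_j = \tilde f^{-j}(x)$. These remain on a common unstable leaf, so partial hyperbolicity together with the quasi-isometry of $\mathcal{F}^u$ gives
\begin{equation*}
\|y_j - x_j\| \leq d^u(y_j, x_j) \leq \lambda_-^{-j}\, d^u(y, x) \leq Q^{-1} \lambda_-^{-j} R.
\end{equation*}
Iterating $\tilde f(z) = Az + h(z)$ produces the standard telescoping identity
\begin{equation*}
y - x = A^n (y_n - x_n) + \sum_{j=1}^n A^{j-1} \bigl[h(y_j) - h(x_j)\bigr],
\end{equation*}
obtained by applying the expansion $\tilde f^n(z) = A^n z + \sum_{k=0}^{n-1} A^{n-1-k} h(\tilde f^k z)$ to both $z = y_n$ and $z = x_n$. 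Projecting onto $E_A^s$ and using $|\alpha^s| < 1$ with the crude bound $\|h(y_j) - h(x_j)\| \leq 2 K_0$, the series is absolutely summable and $\|P^s A^n(y_n - x_n)\| \leq |\alpha^s|^n \|y_n - x_n\| \to 0$, so $\|P^s(y - x)\|$ is bounded independently of $R$. Projecting onto $E_A^c$, I combine the crude bound with the Lipschitz estimate $\|h(y_j) - h(x_j)\| \leq L \|y_j - x_j\| \leq L Q^{-1} \lambda_-^{-j} R$, using the crude bound for $j \leq j^\star := \lfloor \log_{\lambda_-} R \rfloor$ and the Lipschitz one for $j > j^\star$. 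The inequality $|\alpha^c| < \lambda_-$, inherited from the partial hyperbolicity of the linearization, yields $\|P^c(y - x)\| = O(R^{\log|\alpha^c|/\log \lambda_-}) = o(R)$. Dividing through by $R$ forces $(y - x)/R \to E_A^u$, with uniformity in $x$ since every constant depends only on $f$ and $A$.

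For $\sigma = c$ the eigenvalue $|\alpha^c|$ can lie on either side of $1$, so the previous single-scale estimate does not directly apply. I would invoke the Brin--Burago--Ivanov center-stable and center-unstable two-dimensional foliations $\mathcal{F}^{cs}, \mathcal{F}^{cu}$ and first prove the analogous asymptotic alignment for each: $\mathcal{F}^{cs}_x$ lies at uniformly bounded transverse distance from $x + E_A^{cs}$ (forward iteration, using the contraction of $E^s$), and $\mathcal{F}^{cu}_x$ from $x + E_A^{cu}$ (backward iteration, using $E^u$), by exactly the same telescoping/projection scheme but applied to the appropriate one-dimensional complement. Since $\mathcal{F}^c = \mathcal{F}^{cs} \cap \mathcal{F}^{cu}$ and $E_A^{cs} \cap E_A^{cu} = E_A^c$, the transverse intersection of two 2-planes asymptotic to affine planes produces the desired asymptotic for center leaves.

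The main obstacle is the delicate bookkeeping for the center component: one needs the sharp inequality $|\alpha^c| < \lambda_-$ (which follows from the fact that partial hyperbolicity of $f$ on $\mathbb{T}^3$ passes to the linearization with preserved spectral gaps), together with the careful crude/Lipschitz split of the geometric sum, to secure genuinely sublinear growth. Extending the scheme to the two-dimensional $\mathcal{F}^{cs}$ and $\mathcal{F}^{cu}$ requires no new idea but more indexing, since one must simultaneously control alignment to an entire $2$-plane rather than a single line.
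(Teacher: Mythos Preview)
The paper does not give its own proof of this proposition; it is quoted verbatim from Hammerlindl \cite{H} and used as a black box. So there is no ``paper's proof'' to compare your attempt to, and the question is whether your sketch stands on its own.

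Your telescoping scheme is the right idea, and the $P^s$-estimate for $\sigma=u$ is clean. The real issue is the $P^c$-estimate. Everything there hinges on the inequality $|\alpha^c|<\lambda_-$, and your justification (``partial hyperbolicity passes to the linearization with preserved spectral gaps'') is not an argument. What Brin--Burago--Ivanov and Hammerlindl prove is that $A$ has three eigenvalues of distinct moduli; this does \emph{not} by itself say that the middle one lies below the pointwise lower bound $\lambda_-$ for $Df|_{E^u}$. A priori the labelling of the eigenvalues of $A$ as ``stable/center/unstable'' has no relation to the rate constants $\nu_\pm,\mu_\pm,\lambda_\pm$ of $f$, and establishing that relation is precisely the substantive part of Hammerlindl's proof of this proposition. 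As written, your argument is circular at this step.

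The gap is fillable, but it needs the two--dimensional foliations, not just $\mathcal{F}^u$. From the growth of $\mathcal{F}^u$ under $\tilde f$ and quasi-isometry you get directly that some eigenvalue of $A$ has modulus $\ge\lambda_-$; likewise some eigenvalue has modulus $\le\nu_+$. Now suppose for contradiction that \emph{two} eigenvalues $\beta_2,\beta_3$ satisfy $|\beta_i|\ge\lambda_->\mu_+$. Run your \emph{forward} telescoping on $\mathcal{F}^{cs}$: since $d^{cs}(\tilde f^n y,\tilde f^n x)\le \mu_+^{\,n}\,d^{cs}(y,x)$, the projections $P^{\beta_2}(y-x)$ and $P^{\beta_3}(y-x)$ are both uniformly bounded for $y\in\mathcal{F}^{cs}_x$, exactly as in your $P^s$-computation. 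That traps the quasi-isometrically embedded plane $\mathcal{F}^{cs}_x$ inside a tube of bounded radius around a line, which is impossible. Hence the middle eigenvalue satisfies $|\alpha^c|<\lambda_-$, and your crude/Lipschitz split then goes through. Note that this argument really uses the quasi-isometry of $\mathcal{F}^{cs}$ (Brin--Burago--Ivanov), not only of $\mathcal{F}^u$; you should flag that dependence explicitly. Your treatment of $\sigma=c$ via $\mathcal{F}^{cs}\cap\mathcal{F}^{cu}$ is the right route, but the parenthetical ``using the contraction of $E^s$'' is misleading: what you need for $\mathcal{F}^{cs}$ is $|\alpha^u|>\mu_+$ (and for $\mathcal{F}^{cu}$, $|\alpha^s|<\mu_-$), both of which follow from the same growth-rate comparison as above.
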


\begin{proposition} [\cite{H}] \label{H2} Let $f : \mathbb{T}^3 \rightarrow \mathbb{T}^3$ be a partially hyperbolic diffeomorphism and $A: \mathbb{T}^3 \rightarrow \mathbb{T}^3$ the linearization of $f$ then for each $k \in  \mathbb{Z}$ and $C > 1$ there is an $M > 0$ such that for $x, y$,
$$||x -  y||> M \Rightarrow
\frac{1}{C} <\frac{|| f^k(x) -  f^k(y)||}
{||A^k(x) - A^k(y)||}
< C.$$

More generally, for each $k \in  \mathbb{Z}$, $C > 1$, and linear map $\pi: \mathbb{R}^d \rightarrow \mathbb{R}^d$ there is an
$M > 0$ such that for $x, y \in \mathbb{R}^d$,

$$||\pi(x - y)|| > M  \Rightarrow \frac{1}{C} <
\frac{||\pi(f^k(x) -  f^k(y))||}
{||\pi(A^k(x) - A^k(y))||}
< C.$$
\end{proposition}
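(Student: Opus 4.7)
The plan is to reduce both assertions to the elementary observation that, once one lifts to the universal cover, $f^k$ differs from $A^k$ by a uniformly bounded displacement. First I would fix a lift $\tilde f\colon\mathbb{R}^3\to\mathbb{R}^3$ of $f$. Because $f$ and $A$ induce the same automorphism of $\pi_1(\mathbb{T}^3)=\mathbb{Z}^3$, for every $k\in\mathbb{Z}$ the map
\[
h_k:=\tilde f^{\,k}-A^k
\]
is equivariant under deck transformations, hence descends to a continuous map $\mathbb{T}^3\to\mathbb{R}^3$ and therefore is uniformly bounded; set $K_k:=\sup_{x\in\mathbb{R}^3}\|h_k(x)\|<\infty$.

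Next I would exploit the identity
\[
\tilde f^{\,k}(x)-\tilde f^{\,k}(y)=\bigl(A^k(x)-A^k(y)\bigr)+\bigl(h_k(x)-h_k(y)\bigr),
\]
in which the second summand has norm at most $2K_k$. The reverse triangle inequality then gives
\[
\bigl|\,\|\tilde f^{\,k}(x)-\tilde f^{\,k}(y)\|-\|A^k(x)-A^k(y)\|\,\bigr|\le 2K_k.
\]
For the first claim, since $A^k$ is a linear automorphism of $\mathbb{R}^3$ one has $\|A^k(x)-A^k(y)\|\ge\|A^{-k}\|^{-1}\|x-y\|$; taking $M$ so large that $2K_k\|A^{-k}\|/M<1-1/C$ forces the ratio $\|\tilde f^{\,k}(x)-\tilde f^{\,k}(y)\|/\|A^k(x)-A^k(y)\|$ to lie in $(1/C,C)$.

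For the general statement involving a linear $\pi$, applying $\pi$ to the displacement identity yields the same inequality with $2\|\pi\|K_k$ in place of $2K_k$. The step I expect to be the main obstacle, and the only one needing care, is a lower bound on $\|\pi(A^k(x-y))\|$ in terms of $\|\pi(x-y)\|$: for a generic $\pi$ the $A^k$-image of $x-y$ can lie in $\ker\pi$ while $\pi(x-y)$ is large, which would trivially destroy the estimate. In all applications of interest here $\pi$ is a projection onto one of the $A$-invariant subbundles $E^{\sigma}_A$ and so commutes with $A$; then $\pi A^k=A^k\pi$ gives $\|\pi A^k v\|\ge\|A^{-k}\|^{-1}\|\pi v\|$, and the preceding argument closes the proof verbatim. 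I would therefore present the general statement under this implicit compatibility hypothesis on $\pi$.
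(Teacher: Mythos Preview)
The paper does not supply its own proof of this proposition; it is quoted from Hammerlindl \cite{H} and stated without argument, so there is nothing in the paper to compare your attempt against. Your approach via the bounded displacement $h_k=\tilde f^{\,k}-A^k$ is in fact the standard one (and is essentially Hammerlindl's): lift to $\mathbb{R}^3$, observe that $\tilde f^{\,k}-A^k$ is $\mathbb{Z}^3$-periodic hence uniformly bounded, and conclude from the triangle inequality once the denominator is forced to be large by invertibility of $A^k$. For the first assertion your proof is complete and correct.

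Your caution about the second assertion is well placed and is a genuine point. As literally stated, for an arbitrary linear $\pi$ one can have $\|\pi(x-y)\|\to\infty$ while $\pi A^k(x-y)=0$ (take any nonzero $v\in\ker(\pi A^k)\setminus\ker\pi$ and scale), so the ratio can degenerate. In Hammerlindl's setting, and in every invocation of this proposition in the present paper, $\pi$ is a projection onto a sum of eigenspaces of $A$ and therefore commutes with $A$; under that hypothesis your closing argument via $\pi A^k=A^k\pi$ and $\|\pi A^k v\|\ge\|A^{-k}\|^{-1}\|\pi v\|$ is exactly right. Stating the general form under this commutation hypothesis, as you propose, is the appropriate fix.
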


\begin{theorem}[\cite{H}] \label{H3} Every partially hyperbolic diffeomorphism of the 3-torus is leaf conjugated
to its linearization, by a homeomorphism $h.$ Furthermore $h$ restricted to each center leaf is bi-Lipschitz and denoting $\widetilde{h}$ a lift of $h$ in $\mathbb{R}^3 $ one has that

$$|| \widetilde{h} - Id_{\mathbb{R}^3}||$$

is bounded.

\end{theorem}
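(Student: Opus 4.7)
The plan is to work in the universal cover $\mathbb{R}^3$ and to exploit the asymptotic parallelism from Proposition \ref{H1} together with the quasi-isometric property of the invariant foliations to construct a matching between $\widetilde{\mathcal{F}}^{\sigma}_f$-leaves and the affine foliations $\widetilde{\mathcal{F}}^{\sigma}_A = \{x + E_A^{\sigma}\}$ of the linearization. The first step is a shadowing lemma: each lifted leaf $L = \widetilde{\mathcal{F}}^{\sigma}_f(y)$ lies at bounded Hausdorff distance from a unique affine leaf $L^A \in \widetilde{\mathcal{F}}^{\sigma}_A$. By Proposition \ref{H1} the unit tangent along $L$ tends uniformly to $E_A^{\sigma}$, so the orthogonal projection of $L$ onto $(E_A^{\sigma})^{\perp}$ is bounded; combining this with the quasi-isometric bound $d_W(x,y) \le Q^{-1}\|x-y\|$ forbids $L$ from wandering far transversely at small scales, yielding the bounded Hausdorff distance.

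Using dynamical coherence, for each $y \in \mathbb{R}^3$ both $\widetilde{\mathcal{F}}^{cs}_f(y)$ and $\widetilde{\mathcal{F}}^{cu}_f(y)$ are well defined; their shadows in $\widetilde{\mathcal{F}}^{cs}_A$ and $\widetilde{\mathcal{F}}^{cu}_A$ are two transverse affine $2$-planes whose intersection is a single $A$-center leaf $\ell(y)$. I would then define $\widetilde{h}(y)$ to be the unique point of $\ell(y)$ whose $E_A^c$-coordinate is fixed by requiring $\widetilde{h}(y)$ to lie on the affine $A$-stable line shadowing $\widetilde{\mathcal{F}}^{s}_f(y)$. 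By construction $\widetilde{h}$ sends $\widetilde{\mathcal{F}}^{\sigma}_f$-leaves into $\widetilde{\mathcal{F}}^{\sigma}_A$-leaves; continuity and properness follow from the uniform convergence in Proposition \ref{H1}, and the bounded Hausdorff distance from each $f$-leaf to its shadow yields directly $\|\widetilde{h} - \mathrm{Id}_{\mathbb{R}^3}\| < \infty$. Equivariance of $\widetilde{h}$ under $\mathbb{Z}^3$-deck transformations follows from canonicity of the shadow assignment, so $\widetilde{h}$ descends to $h : \mathbb{T}^3 \to \mathbb{T}^3$.

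The leaf conjugacy relation $h \circ f(\mathcal{F}^c_f(x)) = A \circ h(\mathcal{F}^c_f(x))$ is then verified from the fact that $f$ permutes $\mathcal{F}^c_f$-leaves and, by Proposition \ref{H2}, at large scales $\widetilde{f}$ is uniformly comparable to $A$, so the shadow of $\widetilde{f}(\widetilde{\mathcal{F}}^c_f(x))$ must coincide with $A$ applied to the shadow of $\widetilde{\mathcal{F}}^c_f(x)$. For the bi-Lipschitz property on center leaves I would compare the intrinsic arc length of a center arc of $\widetilde{\mathcal{F}}^c_f$ with the Euclidean length of its shadow in $\widetilde{\mathcal{F}}^c_A$: quasi-isometry of $\widetilde{\mathcal{F}}^c_f$ gives one side, while the reverse inequality follows by iterating forward and backward — long center arcs of $f$ must stretch at the same large-scale rate as $A$-center arcs of the same shadow, again by Proposition \ref{H2}.

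The main obstacle I expect is making the shadowing step rigorous and, in particular, establishing the \emph{uniqueness} of the shadow leaf together with its continuous dependence on the base point. Asymptotic parallelism of tangent directions at infinity does not by itself rule out transverse oscillations at bounded scales; the quasi-isometric estimate must be invoked quantitatively to produce a uniform transverse displacement bound. A related technical nuisance is that when $E_A^c$ has irrational slope the $A$-center leaves are dense in $\mathbb{T}^3$ and the center leaf space is non-Hausdorff, so the entire construction must be carried out in $\mathbb{R}^3$ and only descended at the end via $\mathbb{Z}^3$-equivariance; checking that the shadow assignment is honestly $\mathbb{Z}^3$-equivariant, not merely equivariant modulo bounded error, is the delicate point.
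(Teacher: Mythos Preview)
The paper does not prove Theorem~\ref{H3} at all: it is quoted verbatim from Hammerlindl~\cite{H} (note the attribution in the theorem header) and used as a black box in the rest of the arguments. There is therefore no ``paper's own proof'' to compare your proposal against.

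For what it is worth, your sketch is broadly in the spirit of Hammerlindl's construction --- working in the universal cover, showing each lifted $\widetilde{\mathcal{F}}^{\sigma}_f$-leaf stays within a uniform tube around a unique affine $A$-leaf, and assembling the leaf conjugacy from these shadow assignments --- and you have correctly identified the genuine difficulty, namely turning asymptotic parallelism of directions (Proposition~\ref{H1}) into a \emph{uniform} transverse displacement bound and a well-defined, $\mathbb{Z}^3$-equivariant shadow map. One point where your outline is looser than Hammerlindl's actual argument: you pin down $\widetilde{h}(y)$ along the $E_A^c$-direction by intersecting with the shadow of the \emph{strong stable} leaf, but the $E_A^s$-line and the $E_A^c$-line are transverse only inside the $cs$-plane, so this does not by itself select a point on the $A$-center line; Hammerlindl instead parametrizes each $f$-center leaf and each $A$-center leaf by a common transversal (or by arc length) and matches them that way, which is also what ultimately gives the bi-Lipschitz property on center leaves. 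If you want to carry this through, you should consult~\cite{H} directly rather than the present paper.
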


The above theorem and propositions has the following corollaries which is useful in the rest of the paper.

 \begin{lemma} \label{linalg}
 Let $f : \mathbb{T}^3 \rightarrow \mathbb{T}^3$ be a partially hyperbolic diffeomorphism and $A: \mathbb{T}^3 \rightarrow \mathbb{T}^3$ the linearization of $f.$ For all $n \in \mathbb{Z}$ and $\epsilon > 0$ there exists $M$ such that for $x, y$ with $y \in \mathcal{F}^{\sigma}_x$ and $||x -  y||> M$ then
 $$
   (1 - \varepsilon)e^{n\lambda^{\sigma}_A } ||y -x|| \leq \|A^n(x) - A^n(y)\| \leq (1 + \varepsilon)e^{n\lambda^{\sigma}_A } ||y -x||
 $$
where $\lambda^{\sigma}$ is the Lyapunov exponent of $A$ corresponding to $E^{\sigma}$ and $\sigma \in \{s, c, u\}.$
 \end{lemma}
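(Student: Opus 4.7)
\bp (sketch of the plan) The proof is a direct combination of the linearity of $A$ on $\mathbb{R}^3$ with the asymptotic direction result of Proposition \ref{H1}.

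The plan is to work on the universal cover $\mathbb{R}^3$, where $A$ lifts to an actual linear map, so that for all $n \in \mathbb{Z}$ and $x,y \in \mathbb{R}^3$ one has the identity
\begin{equation*}
A^n(x) - A^n(y) \;=\; A^n(x-y),
\end{equation*}
and hence $\|A^n(x)-A^n(y)\| = \|y-x\|\cdot \|A^n v_{x,y}\|$, where $v_{x,y}:=\frac{x-y}{\|x-y\|}$ is the unit direction. In this form the problem reduces to controlling the norm of $A^n$ applied to a unit vector that is close to $E_A^\sigma$.

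First I would record the unperturbed model: since $E_A^\sigma$ is a one–dimensional $A$–invariant subspace whose Lyapunov exponent is $\lambda^\sigma_A$, every unit vector $u \in E_A^\sigma$ satisfies $\|A^n u\| = e^{n\lambda^\sigma_A}$. The continuous function $g(v):=\|A^n v\|$ on the unit sphere $S^2 \subset \mathbb{R}^3$ therefore attains the value $e^{n\lambda^\sigma_A}$ on the (compact) set $E_A^\sigma \cap S^2$. By continuity of $g$, given $\varepsilon>0$ there exists $\delta>0$ such that
\begin{equation*}
(1-\varepsilon)e^{n\lambda^\sigma_A} \;<\; g(v) \;<\; (1+\varepsilon)e^{n\lambda^\sigma_A}
\end{equation*}
for every unit vector $v$ with $\operatorname{dist}(v,E_A^\sigma\cap S^2)<\delta$.

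Next I would bring in Proposition \ref{H1}: the unit direction of $y-x$ along an $\mathcal{F}^\sigma$–leaf converges uniformly to $E_A^\sigma$ as $\|y-x\|\to\infty$. Hence there exists $M=M(\delta,n,\sigma)>0$ such that $y\in\mathcal{F}^\sigma_x$ and $\|y-x\|>M$ imply $\operatorname{dist}\bigl(v_{x,y},E_A^\sigma\cap S^2\bigr)<\delta$. Combining the two previous steps and multiplying by $\|y-x\|$ gives
\begin{equation*}
(1-\varepsilon)e^{n\lambda^\sigma_A}\,\|y-x\|
\;\leq\; \|A^n(x)-A^n(y)\|
\;\leq\; (1+\varepsilon)e^{n\lambda^\sigma_A}\,\|y-x\|,
\end{equation*}
as required. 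There is no serious obstacle: the only points deserving care are checking that Proposition \ref{H1} is applicable uniformly in $n$ (it is, since $n$ is fixed before $M$ is chosen) and that the argument works for negative $n$ as well, which is immediate because $A^n$ remains linear and $E_A^\sigma$ remains $A$–invariant. \fp
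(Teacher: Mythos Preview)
Your proposal is correct and follows essentially the same approach as the paper: both arguments work on the universal cover, invoke Proposition~\ref{H1} to say that the unit direction $\frac{x-y}{\|x-y\|}$ is close to $E_A^\sigma$ when $\|x-y\|$ is large, and then use the linearity of $A$ together with the fact that $A^n$ scales $E_A^\sigma$ by exactly $e^{n\lambda^\sigma_A}$. The only cosmetic difference is that the paper writes the unit direction explicitly as $v+e_M$ (unit eigenvector plus small error) and bounds $\|A^N e_M\|$ by $\|A\|^N\|e_M\|$ via the triangle inequality, whereas you package the same estimate as continuity of $v\mapsto\|A^n v\|$ on the unit sphere; the content is identical.
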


\begin{proof} Let us fix $\sigma$ and denote by $E_A$ the eigenspace corresponding to $\lambda^{\sigma}_A, \mu := e^{\lambda^{\sigma}_A}.$
Let  $N \in \mathbb{Z}$ and Choose $x, y \in \mathcal{F}^{\sigma}_f(x),$ such that $|| x - y || > M.$ By proposition \ref{H1}, we have

$$ \frac{x - y}{|| x - y||} = v + e_M,$$

where the vector $v = v_{E_A}$ is a unitary eigenvalue of  $A,$ in the  $E_A$ direction and  $e_M$ is a correction vector that converges to zero uniformly as  $M$ goes to  infinity.

So, considering $\mu$ the eigenvalue of $A$ in the $E_A$ direction

$$A^N \left( \frac{x - y}{|| x - y||} \right) = \mu^N v + A^N e_M = \mu^N \left(\frac{x - y}{|| x - y||} \right) -\mu^N e_M  + A^N e_M  $$

It implies that

\begin{align*} || x - y || (\mu^N - \mu^N ||e_M|| - ||A||^N || e_M||) \leq   || A^N (x - y)|| \\ \leq || x - y || (\mu^N + \mu^N ||e_M|| + ||A||^N || e_M||).
\end{align*}

Since $N$ is fixed, we can choose  $M > 0,$ such that

$$ \mu^N ||e_M|| + ||A||^N || e_M|| \leq \varepsilon \mu^N.$$

and the lemma is proved. \end{proof}

Another important fact is  that the central holonomy inside center-unstable leaves of partially hyperbolic diffeomorphisms of $\mathbb{T}^3$  is Lipschitz for distant points on the unstable leaves.

\begin{proposition} \label{distantlip}
The center holonomy $h^c$ between unstable leaves is uniformly bi-Lipschitz, for far away points in $\mathcal{F}^u.$ More precisely there is $C > 1,$ such that

$$C^{-1}\leq \frac{d^u(h^c(x), h^c(y))}{d^u(x,y)} \leq C,$$

whenever $y \in \mathcal{F}^u_x$ and $d^u(x,y)\geq 1.$

\end{proposition}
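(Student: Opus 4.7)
The plan is to exploit Hammerlindl's leaf conjugacy $h:\mathbb{T}^3\to\mathbb{T}^3$ between $f$ and its linearization $A$, together with the rigid affine structure of the foliations of $\tilde A$ on the universal cover. By Theorem~\ref{H3}, the lift $\tilde h$ satisfies $\|\tilde h-\mathrm{Id}_{\mathbb{R}^3}\|\le D$ for some $D>0$, and it sends each invariant foliation of $\tilde f$ onto the corresponding foliation of $\tilde A$. Because $A$ is linear and partially hyperbolic on $\mathbb{T}^3$, its eigenvalues are real with strictly distinct moduli, so $\mathbb{R}^3=E^s_A\oplus E^c_A\oplus E^u_A$ splits into three one-dimensional eigenlines and the invariant foliations of $\tilde A$ are affine foliations by translates of these lines.

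Fix $y\in\mathcal{F}^u_x$ with $d^u(x,y)\ge 1$ and set $\bar x=h^c(x)$, $\bar y=h^c(y)$ on a second unstable leaf in the common center-unstable leaf. Lifting to $\mathbb{R}^3$, the four points $\tilde h(x), \tilde h(y), \tilde h(\bar x), \tilde h(\bar y)$ satisfy
\begin{align*}
\tilde h(y)-\tilde h(x) &\in E^u_A, & \tilde h(\bar y)-\tilde h(\bar x) &\in E^u_A,\\
\tilde h(\bar x)-\tilde h(x) &\in E^c_A, & \tilde h(\bar y)-\tilde h(y) &\in E^c_A,
\end{align*}
because $\tilde h$ preserves the three foliations and each $\tilde A$-leaf is an affine translate of one of the eigenlines. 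Subtracting, $\bigl(\tilde h(\bar y)-\tilde h(\bar x)\bigr)-\bigl(\tilde h(y)-\tilde h(x)\bigr)\in E^c_A\cap E^u_A=\{0\}$, so
$$\tilde h(\bar y)-\tilde h(\bar x)=\tilde h(y)-\tilde h(x).$$
This is the crux: on the linear side the center holonomy between two unstable leaves is literally a translation along $E^c_A$ and hence preserves Euclidean length of unstable vectors. Two applications of the bound $\|\tilde h-\mathrm{Id}\|\le D$ convert this identity into $\bigl|\,\|\bar y-\bar x\|-\|y-x\|\,\bigr|\le 4D$. Combining with the quasi-isometry of $\tilde{\mathcal F}^u_f$, which provides $Q>0$ with $Q\,d^u(p,q)\le\|p-q\|\le d^u(p,q)$ for $p,q$ on a common unstable leaf, I obtain the two-sided estimate $Q\,d^u(x,y)-4D\le d^u(\bar x,\bar y)\le Q^{-1}\bigl(d^u(x,y)+4D\bigr)$, which is bi-Lipschitz as soon as $d^u(x,y)$ is large enough that the $4D$ term is negligible relative to $Q\,d^u(x,y)$.

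The main obstacle is the short-distance regime $1\le d^u(x,y)\le 8D/Q$, where the additive $4D$ could swamp the leading term. I would handle this by iteration: $h^c$ is $f$-equivariant in the sense that $f\circ h^c$ equals the center holonomy between the forward images of the two unstable leaves composed with $f$, and uniform unstable expansion pushes $d^u(f^n x,f^n y)$ above the threshold $8D/Q$ for $n$ large enough, with $n$ bounded uniformly in the starting pair since $d^u(x,y)\ge 1$. Applying the large-distance estimate to the pushed-forward configuration $(f^n x,f^n y,f^n \bar x,f^n \bar y)$ and pulling back via bounded distortion along the unstable direction (uniform on the compact manifold $\mathbb{T}^3$) recovers a bi-Lipschitz bound in the short range with only a uniform multiplicative loss. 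Alternatively, a direct compactness argument using continuity of $h^c$ on the bounded configuration space $\{1\le d^u(x,y)\le 8D/Q\}$ suffices just as well.
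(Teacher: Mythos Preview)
Your argument has a genuine gap at the ``crux'' step. Theorem~\ref{H3} only asserts that the leaf conjugacy $h$ carries \emph{center} leaves of $f$ to center leaves of $A$; it does \emph{not} claim that $h$ sends $\mathcal{F}^u_f$ to $\mathcal{F}^u_A$. Hence from $y\in\mathcal{F}^u_x$ you cannot conclude $\tilde h(y)-\tilde h(x)\in E^u_A$, and the identity $\tilde h(\bar y)-\tilde h(\bar x)=\tilde h(y)-\tilde h(x)$ is unjustified. What you \emph{do} get from $\bar x\in\mathcal{F}^c_x$ and $\bar y\in\mathcal{F}^c_y$ is only that
\[
\bigl(\tilde h(\bar y)-\tilde h(\bar x)\bigr)-\bigl(\tilde h(y)-\tilde h(x)\bigr)\in E^c_A,
\]
which is far weaker: it controls the $E^u_A\oplus E^s_A$ component of the difference, not its norm. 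To salvage this route you would need an additional uniform bound on the $E^c_A$-component of $y-x$ for $y\in\mathcal{F}^u_x$ (something like ``each $\tilde{\mathcal F}^u_f$-leaf lies in a tube of fixed width around a translate of $E^u_A$''), which is true but is essentially what Proposition~\ref{H1} and the Hammerlindl machinery provide and must be invoked explicitly; it is not a consequence of the leaf conjugacy alone.

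A second, smaller issue: the ``direct compactness'' alternative for the range $1\le d^u(x,y)\le 8D/Q$ does not work as stated. The configuration space you describe is not compact, because the center holonomy $h^c$ depends also on the target unstable leaf, i.e.\ on the center distance between $x$ and $\bar x=h^c(x)$, and this distance is unbounded. Uniformity in that parameter is precisely the content of the proposition (the paper stresses this right after the statement), so compactness cannot supply it. Your iteration argument via $f^n$ is fine in principle, but it rests on the large-distance estimate, which is where the first gap lies.

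For comparison, the paper's proof avoids any claim about $h$ acting on $\mathcal{F}^u$: it uses $h$ only to map the two center leaves $\mathcal{F}^c_x,\mathcal{F}^c_y$ to two distinct parallel affine lines $L_x,L_y$ in the $E^c_A$ direction, obtains the lower bound from the positive distance between $L_x$ and $L_y$, and obtains the upper bound by a triangle argument using Proposition~\ref{H1} (the asymptotic direction of $\mathcal{F}^u$ and $\mathcal{F}^c$). It then passes from $d^u(x,y)=1$ to $d^u(x,y)\ge 1$ by subdividing into unit segments.
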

For the sake of completeness we prove the above proposition in the appendix.
Observe that the Lipschitz constant claimed in the proposition does not depend on the distance (on central leaf) between $x$ and $h^c(x).$

Finally, we recall a rigidity result in the context of Anosov diffeomorphisms. Let $f$ and $g$ be topologically conjugate Anosov diffeomorphisms, $h \circ f = g \circ h.$ We say that the periodic data of $f$ and $g$ coincide if  for every periodic point $x, f^p(x) = x,$  $Df^p(x)$ and $Dg^p(h(x))$ are conjugate operators.

\begin{theorem}[Smooth Conjugacy \cite{GoGu}]Let $f$ and $g$ be Anosov diffeomorphisms of $\mathbb{T}^3$
and $h \circ f = g \circ h,$
where $h$ is a homeomorphism homotopic to identity. Suppose that $f$ and $g$ have the same periodic data.
Assume that $f$ and $ g$ can be viewed as partially hyperbolic diffeomorphisms:
there is an $f-$invariant splitting $T\mathbb{T}^
3 = E^s_f \oplus  E^u_f \oplus E^{uu}_f$ also $T\mathbb{T}^
3 = E^s_g \oplus  E^u_g \oplus E^{uu}_g.$

Then the conjugacy $h$ is $C^{
1+\theta}$ for some $\theta> 0.$
\end{theorem}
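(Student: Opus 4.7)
The plan is to reduce the global $C^{1+\theta}$ regularity of the conjugacy $h$ to leafwise regularity along each of the three one-dimensional invariant foliations $\mathcal{F}^s, \mathcal{F}^u, \mathcal{F}^{uu}$, and then to patch these together using Journé's lemma. The overall scheme follows the de la Llave--Marco--Moriyón programme for smooth rigidity of Anosov systems from periodic data, adapted to the partially hyperbolic splitting in dimension three. The passage from $f$ to $f^2$ in the statement is a standard device to make every invariant line bundle orientable, so from the outset I would replace $f,g$ by $f^2,g^2$ and work in the orientation-preserving category.

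First I would establish smoothness of $h$ along the strong stable foliation $\mathcal{F}^s$. Since the leaves are one-dimensional, $f|_{\mathcal{F}^s_f(x)}$ and $g|_{\mathcal{F}^s_g(h(x))}$ are conjugate contractions of $\mathbb{R}$, and the hypothesis that the periodic data coincide means that $\log J^s f$ and $h^*\log J^s g$ have the same Birkhoff sums over every periodic orbit. By the Livšic theorem, these two Hölder cocycles are cohomologous with a Hölder coboundary; hence there is a Hölder function intertwining the Lyapunov metrics, and a non-stationary linearization argument (Sternberg-type linearization on the leaf, performed fibrewise) upgrades $h|_{\mathcal{F}^s}$ to $C^{1+\theta}$. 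The same argument, applied to the $f$-inverse dynamics restricted to $\mathcal{F}^{uu}$, produces smoothness of $h$ along the strong unstable leaves.

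The delicate step is smoothness of $h$ along the weak unstable foliation $\mathcal{F}^u$, which is really the point where the partially hyperbolic structure $E^s\oplus E^u\oplus E^{uu}$ enters nontrivially. Here $E^u$ is expanding but strictly dominated by $E^{uu}$, so the leafwise dynamics along $\mathcal{F}^u$ is not the "extremal" direction and one cannot directly invoke classical linearization. The idea is to use the bunching gap between $\lambda^u$ and $\lambda^{uu}$ to build a non-stationary smooth linearization on each $\mathcal{F}^u$-leaf, obtained by averaging along backward orbits: the Livšic solution coming from periodic data gives a Hölder candidate, and the spectral gap $\lambda^u < \lambda^{uu}$ together with the $C^2$ hypothesis guarantees that this candidate is actually $C^{1+\theta}$ transverse to the bigger rate. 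This is the main obstacle, since one must rule out pathological behaviour created by the competing $\mathcal{F}^{uu}$ foliation sitting inside the same $\mathcal{F}^{cu}$ leaf.

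Once leafwise $C^{1+\theta}$ regularity is known along each of $\mathcal{F}^s,\mathcal{F}^u,\mathcal{F}^{uu}$, I would finish by applying Journé's lemma twice. First, since $\mathcal{F}^u$ and $\mathcal{F}^{uu}$ are transverse foliations subfoliating $\mathcal{F}^{cu}$ and $h$ is $C^{1+\theta}$ along each, Journé's lemma yields $h|_{\mathcal{F}^{cu}}\in C^{1+\theta}$. Second, since $\mathcal{F}^s$ and $\mathcal{F}^{cu}$ are transverse and jointly span $T\mathbb{T}^3$, a further application of Journé's lemma promotes this to a global $C^{1+\theta}$ regularity of $h$ on $\mathbb{T}^3$. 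The bulk of the technical work therefore lives in step three; steps one, two, and four are, by now, standard once the periodic data matching and the bunching gap are in hand.
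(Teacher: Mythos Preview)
This theorem is not proved in the paper at all: it is stated in the Preliminaries section as a result of Gogolev--Guysinsky \cite{GoGu} and is then used as a black box at the end of the proof of the rigidity theorem for Anosov diffeomorphisms. There is consequently no ``paper's own proof'' to compare your proposal against.

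For what it is worth, your outline is a reasonable high-level account of the strategy actually used in \cite{GoGu}: Liv\v{s}ic plus non-stationary leafwise linearization along the extremal one-dimensional foliations $\mathcal{F}^s$ and $\mathcal{F}^{uu}$, a more delicate argument for the intermediate weak-unstable direction $\mathcal{F}^u$, and two applications of Journ\'e's lemma to pass from leafwise to global $C^{1+\theta}$ regularity. One point you pass over, and which is a genuine part of the work in \cite{GoGu}, is that the conjugacy $h$ is not \emph{a priori} known to send leaves of $\mathcal{F}^u_f$ to leaves of $\mathcal{F}^u_g$: only the extremal foliations $\mathcal{F}^s$, $\mathcal{F}^{uu}$ and the full two-dimensional unstable foliation are topologically determined and hence automatically preserved by a conjugacy. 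Establishing that $h$ respects the weak-unstable subfoliation already requires the periodic-data hypothesis and is part of what your ``delicate step'' must contain. Also, the passage from $f$ to $f^2$ that you mention is a feature of the authors' rigidity theorem in this paper, not of the Gogolev--Guysinsky statement being quoted.
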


\section{Technical Rigidity Results and proof of Theorem \ref{thmrigidexponents}}

In this section we prove some technical rigidity results for Lyapunov exponents which will be used in the proofs of the main theorems. In particular we prove Theorem \ref{thmrigidexponents}.

Let us concentrate on volume prserving partially hyperbolic diffeomorphisms of $\mathbb{T}^3.$ One important result which appears in the works of Pesin-Sinai and Ledrappier (see \cite{L} and \cite{PS}) is the exact formula for the desintegration of the Lebesgue measure along unstable manifolds (even in the Pesin theory setting): Take $\xi$ be a measurable partition subordinated to the unstable foliation. For $y \in \xi(x)$ define
\begin{equation} \label{produto}
\Delta^u(x, y) := \prod_{i=1}^{\infty} \frac{J^uf(f^{-i}(x))}{J^uf(f^{-i}(y))}.
\end{equation}
After normalizing  $\displaystyle{ \rho(y) := \frac{\Delta^u(x, y)}{L(x)}}$ where $L(x) = \displaystyle{ \int_{\xi(x)} \Delta^u(x, y) d Leb_x}$ and $\rho(\cdot)$ is the Radon-Nikodym derivative $\displaystyle{ \frac{d m_x}{d Leb_x}}.$ We emphasize that  such a clear formula for the desintegration along  a genral leafwise absolutely continuous foliation (for instance for central foliation whenever it is absolutely continuous) is not available. We use this formula in the proof of Theorem \ref{thmrigidexponents}. Here we observe some elementary properties of $\Delta^u.$
First of all note  that $C^2-$regularity of $f$ and H\"older continuous dependence of $E^u$ with the base point give us:
\begin{lemma} \label{obvio}
For any $ \epsilon> 1$ there exists $\delta > 0$ such that if $y \in W^u_{\delta, x} \subset \mathcal{F}^u_x$ then
$$
 1 -\epsilon \leq \Delta^u(x, y) \leq 1 + \epsilon.
$$
\end{lemma}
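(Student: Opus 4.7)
The plan is to show that the infinite product defining $\Delta^u(x,y)$ converges very rapidly and uniformly to $1$ as $y$ approaches $x$ along $\mathcal{F}^u_x$, by exploiting two standard ingredients: backward contraction of the unstable foliation and Hölder regularity of the unstable Jacobian.

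First, I would pass to the logarithm and write
\[
\log \Delta^u(x,y) \;=\; \sum_{i=1}^{\infty} \Bigl( \log J^u f(f^{-i}(x)) - \log J^u f(f^{-i}(y)) \Bigr).
\]
Since $f$ is $C^2$ and the unstable distribution $E^u$ is Hölder continuous on $M$, the function $\log J^u f = \log \|Df|_{E^u}\|$ is Hölder continuous: there exist $\alpha \in (0,1]$ and $K_0 > 0$ with
\[
|\log J^u f(p) - \log J^u f(q)| \leq K_0 \, d(p,q)^{\alpha}
\]
for all $p,q \in M$. This handles the individual terms once we control the distances $d(f^{-i}(x), f^{-i}(y))$.

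Next I would use that backward iteration contracts the unstable leaf geometrically. Since $y \in \mathcal{F}^u_x$, by the partial hyperbolicity inequalities applied to $Df^{-i}|E^u$ we have
\[
d^u(f^{-i}(x), f^{-i}(y)) \leq \lambda_-^{-i} \, d^u(x,y)
\]
(after choosing the adapted metric so $C=1$). In particular, for $y \in W^u_{\delta,x}$ we get $d^u(f^{-i}(x), f^{-i}(y)) \leq \lambda_-^{-i}\delta$, and the ambient distance is bounded by the leaf distance. Plugging into the Hölder bound,
\[
|\log \Delta^u(x,y)| \;\leq\; \sum_{i=1}^\infty K_0 \bigl(\lambda_-^{-i}\delta\bigr)^{\alpha} \;=\; K_0\,\delta^\alpha \sum_{i=1}^\infty \lambda_-^{-i\alpha} \;=\; K_1\,\delta^\alpha,
\]
where $K_1 := K_0 \lambda_-^{-\alpha}/(1 - \lambda_-^{-\alpha}) < \infty$ because $\lambda_- > 1$. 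Given $\varepsilon>0$, I would then pick $\delta$ so small that $K_1 \delta^\alpha < \log(1+\varepsilon)$ (and also less than $-\log(1-\varepsilon)$), which yields $1 - \varepsilon \leq \Delta^u(x,y) \leq 1 + \varepsilon$ for all $y \in W^u_{\delta,x}$.

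There is no serious obstacle here; the only point requiring a brief justification is the Hölder regularity of $\log J^u f$, which is a classical consequence of the $C^2$ hypothesis on $f$ together with Hölder continuity of $E^u$ (Anosov–Sinai). The absolute constants $\alpha, K_0, K_1$ are independent of $x$ and $y$, so the smallness of $\delta$ is uniform in $x$, giving the uniform bound stated in the lemma.
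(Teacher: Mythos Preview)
Your proof is correct and follows essentially the same route as the paper's: take logarithms, use H\"older continuity of $J^uf$ (from $C^2$ regularity of $f$ and H\"older continuity of $E^u$), bound $d(f^{-i}(x),f^{-i}(y))$ by $\lambda_-^{-i}d(x,y)$ using backward contraction on $\mathcal{F}^u$, and sum the resulting geometric series. Your write-up is in fact more detailed than the paper's, which records the same chain of inequalities in a single line.
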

\begin{proof}
Taking logarithm, as  $\alpha-$H\"older continuity of unstable bundle and $J^uf$ implies that \begin{eqnarray*}
\log \Delta^u(x, y) &=& \sum_{i=1}^{\infty} \log J^uf(f^{-i}(x)) - \log J^uf(f^{-i}(y)\\
&\leq& \sum_{i=1}^{\infty} C_1 d(f^{-i}(x), f^{-i}(y))^{\alpha} \leq (C_1  \sum_{i=1}^{\infty}\lambda_{-}^{-i \alpha}) d^{\alpha}(x, y)
\end{eqnarray*}
where $\lambda_{-}$ comes from the definition of partial hyperbolicity. This completes the proof of the lemma.
\end{proof}

\begin{lemma}\label{definição}
Suppose that  $\mathcal{F}^u$ has bounded density property. There exists $K > 1$ such that for almost every $x \in \mathbb{T}^3$ and every $y_1, y_2 \in \mathcal{F}^u_x:$
\begin{equation} \label{productx}
K^{-1} \leq \Delta^u(y_1,y_2) \leq K.
\end{equation}
Moreover, for any $n \in \mathbb{N}:$
\begin{equation} \label{produtofinito}
K^{-2} \leq \prod_{i =0}^{n-1} \frac{J^uf (f^i(y_1))}{J^uf (f^i(y_2))} \leq K^2.
\end{equation}
\end{lemma}

\begin{proof}
By definition of uniform bounded density (\ref{abs.contIII}) it comes out that $\displaystyle{ \frac{\rho(y_2)}{\rho(y_1)}} \in [K^{-2}, K^2].$  Abusing the notations for simplicity, we substitute $K^2$ by $K$ and conclude the first claim of the lemma.

We can suppose that the points $x$ satisfying (\ref{productx}) belong to an invariant set.  So changing $x$ to $f^n(x)$ we have
\begin{equation} \label{productfnx}
K^{-1} \leq  \Delta^u(f^n(y_1), f^n(y_2)) \leq K.
\end{equation}
Dividing equation (\ref{productfnx}) by (\ref{productx}) we conclude the proof of the second claim of  lemma.
\end{proof}

In stable case, we take $f^{-1}$ and apply $(\ref{produto})$  in the $E^s_f = E^u_{f^{-1}}$ direction. Similarly for $y \in \mathcal{F}^s_x$ we define
\begin{equation} \label{sproduto}
\Delta^{s}(x, y) : = \prod_{i =0}^{\infty} \frac{J^{s}f (f^{i}(x))}{J^sf (f^{i}(y))}.
\end{equation}

%

From now on we use the notation $\Delta^{\sigma} (f^n(y_1), f^n(y_2)) = O(1)$ to denote that $\Delta^{\sigma} (f^n(y_1), f^n(y_2))$ is bounded from below and above by constants just depending on $f.$

 Now we state two technical propositions which guarantee the constancy of unstable periodic data and rigidity of Lyapunov exponents and are key to the proof of the main results.
\begin{proposition} \label{pop1}
Let $f$ be a partially hyperbolic diffeomorphism of $\mathbb{T}^3$. Suppose that for any $x$ in an invariant full measure set and any  $y_1, y_2 \in \mathcal{F}^{\sigma}_x:$  $$\Delta^{\sigma}(f^n(y_1), f^n(y_2)) = O(1),$$ then $\lambda^{\sigma}(f, x) = \lambda^{\sigma}(A)$ where $A$ is the linearization of $f$ and $\sigma \in \{s, u\}.$
\end{proposition}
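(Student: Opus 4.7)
By the $f \leftrightarrow f^{-1}$ symmetry between the stable and unstable cases, I treat $\sigma = u$. The plan is to lift to $\mathbb{R}^3$, use the hypothesis to show that $f^n$ expands every sufficiently long unstable arc through $x$ by a factor comparable (up to a bounded multiplicative error) to $J^u f^n(x):=\prod_{i=0}^{n-1}J^u f(f^i(x))$, and then compare this macroscopic expansion with the linear rate $e^{n\lambda^u_A}$ via Lemma \ref{linalg} and Proposition \ref{H2}.

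First, I convert the hypothesis into a uniform finite-product bound. Unwinding definition (\ref{produto}) one checks
$$\frac{\Delta^u(f^n(y_1),f^n(y_2))}{\Delta^u(y_1,y_2)} \;=\; \prod_{j=0}^{n-1}\frac{J^u f(f^j(y_1))}{J^u f(f^j(y_2))},$$
so applying the $O(1)$ hypothesis at $n=0$ and at general $n$ gives a constant $K>1$, independent of $n$ and of $y_1,y_2$ on a common unstable leaf of a generic $x$, with this finite product lying in $[K^{-1},K]$. Taking an arc-length parametrization $\gamma:[0,L]\to\mathcal{F}^u_x$ from $x$ to some $y$ and integrating, the $d^u$-length of the $f^n$-image arc lies in $[K^{-1}L\,J^u f^n(x),\,KL\,J^u f^n(x)]$. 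Quasi-isometry of $\mathcal{F}^u$ (constant $Q$) then converts this into a Euclidean comparison: there is a constant $K_0=K_0(K,Q)$ such that
$$K_0^{-1}\,\|x-y\|\,J^u f^n(x)\;\le\;\|f^n(x)-f^n(y)\|\;\le\;K_0\,\|x-y\|\,J^u f^n(x).$$

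Next, fix $\varepsilon\in(0,1)$ and $C>1$, and for each $n$ choose (in the lift) a point $y_n\in\mathcal{F}^u_x$ with $\|x-y_n\|$ larger than the thresholds provided by Lemma \ref{linalg} and by Proposition \ref{H2} applied with $k=n$; such $y_n$ exist because unstable leaves are unbounded in $\mathbb{R}^3$. The same Euclidean distance then admits the alternative two-sided estimate
$$C^{-1}(1-\varepsilon)\,e^{n\lambda^u_A}\,\|x-y_n\|\;\le\;\|f^n(x)-f^n(y_n)\|\;\le\;C(1+\varepsilon)\,e^{n\lambda^u_A}\,\|x-y_n\|.$$
Applying the previous display at $y=y_n$ and dividing cancels $\|x-y_n\|$, yielding a constant $K_1$ independent of $n$ with
$$K_1^{-1}\,e^{n\lambda^u_A}\;\le\;J^u f^n(x)\;\le\;K_1\,e^{n\lambda^u_A},$$
hence $\lambda^u(f,x)=\lim_n \frac{1}{n}\log J^u f^n(x)=\lambda^u_A$; the stable case follows by replacing $f$ with $f^{-1}$.

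The subtle point is that the thresholds $M$ in Lemma \ref{linalg} and Proposition \ref{H2} genuinely depend on $n$, so $y_n$ must be re-chosen at each iterate. What rescues the plan is that only the multiplicative constants $K_0$, $C$, and $(1\pm\varepsilon)$ survive the division and the $\frac{1}{n}\log$ limit, while the additive thresholds do not; and those constants are uniform in $n$. Conceptually, the hypothesis replaces the infinitesimal derivative by a macroscopic length comparison on an unstable leaf, and at macroscopic scales $f$ and $A$ are forced to agree by the work of Hammerlindl.
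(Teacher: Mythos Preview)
Your proof is correct and rests on the same three ingredients as the paper's: the finite-product bound extracted from the $\Delta^\sigma=O(1)$ hypothesis, quasi-isometry of $\mathcal{F}^\sigma$ to pass between leaf length and Euclidean distance, and the large-scale comparison of $f^n$ with $A^n$ via Proposition~\ref{H2} and Lemma~\ref{linalg}. The paper organizes these as a proof by contradiction (assume $\lambda^\sigma(f,x)>\lambda^\sigma_A$ on a positive-measure set, fix one large $n$, and force $\|f^n x-f^n y\|/\|A^n x-A^n y\|>2$), whereas you run a direct argument: for every $n$ you pick $y_n$ far enough and sandwich $J^\sigma f^n(x)$ between constant multiples of $e^{n\lambda^\sigma_A}$, then take $\frac{1}{n}\log$. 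Your packaging is arguably cleaner since it yields both inequalities simultaneously and avoids the auxiliary sets $A_n$; the paper's version has the mild advantage of needing Proposition~\ref{H2} and Lemma~\ref{linalg} only for a single fixed $n$, but this is cosmetic since in either case the surviving constants are uniform in $n$, exactly as you note in your last paragraph.
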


\begin{proposition} \label{pop2}
Let $f$ be a partially hyperbolic diffeomorphism of $\mathbb{T}^3$. Suppose that, there exists a dense subset $D$ such that any  $y_1, y_2 \in D$ satisfy $$\Delta^{\sigma}(f^n(y_1), f^n(y_2)) = O(1)$$ then the $\sigma-$periodic data is constant for $\sigma \in \{ s,  u\},$  i.e all the periodic points have the same Lyapunov exponent in the $E^{\sigma}-$direction.
\end{proposition}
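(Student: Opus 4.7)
I would work with $\sigma = u$ (the stable case being symmetric). First I would translate the $\Delta^u$-boundedness hypothesis into a uniform bound on differences of Birkhoff sums of $\log J^u f$, and then compare the Lyapunov exponents of two prescribed periodic points by bringing their orbits close to a pair of points of $D$ sharing an unstable leaf.

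The starting point is the cocycle identity
\[
\Delta^u(f^n y_1, f^n y_2) \;=\; \Delta^u(y_1, y_2)\cdot\prod_{j=0}^{n-1}\frac{J^u f(f^j y_1)}{J^u f(f^j y_2)},
\]
obtained by reindexing the infinite product defining $\Delta^u$ (cf.\ \eqref{produtofinito}). Taking logarithms, the hypothesis that $\Delta^u(f^n y_1, f^n y_2)$ is uniformly bounded in $n$ becomes
\[
\Bigl|\sum_{j=0}^{n-1}\bigl[\log J^u f(f^j y_1) - \log J^u f(f^j y_2)\bigr]\Bigr| \leq C, \qquad n \geq 1,
\]
uniformly over all pairs $y_1, y_2 \in D$ lying on a common unstable leaf.

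Next, I would fix two periodic points $p, q$ of common period $\tau$ and, for each $\epsilon > 0$, produce $y_1, y_2 \in D$ on a common unstable leaf with $d(y_1, p) < \epsilon$ and $d(y_2, q) < \epsilon$. Such a pair exists by density of $D$ combined with the density in $\mathbb{T}^3$ of every unstable leaf (minimality of $\mathcal{F}^u$ on the $3$-torus), supplemented by leafwise density of $D$ inside almost every unstable leaf, which holds in the settings where the proposition is invoked (via Fubini and the absolute continuity of $\mathcal{F}^u$). Writing $p_1 = p$ and $p_2 = q$, the shadowing estimate $d(f^j y_i, f^j p_i) \lesssim \lambda_+^j\, d(y_i, p_i)$ together with H\"older continuity of $\log J^u f$ yields, after summing a geometric series,
\[
\Bigl|\sum_{j=0}^{N-1}\log J^u f(f^j y_i) - N\lambda^u(p_i)\Bigr| \leq C'
\]
for every $N$ up to the shadowing time $N(\epsilon) \sim |\log \epsilon|/\log \lambda_+$, with $C'$ independent of $\epsilon$. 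Combining with the Birkhoff-sum difference bound gives $N\,|\lambda^u(p) - \lambda^u(q)| \leq C + 2C'$; letting $\epsilon \to 0$, and hence $N \to \infty$, forces $\lambda^u(p) = \lambda^u(q)$.

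The main obstacle is the geometric construction of the pair $y_1, y_2 \in D$ on a common unstable leaf simultaneously $\epsilon$-close to the two prescribed periodic points. Density of $D$ in $\mathbb{T}^3$ alone does not ensure that $D$ meets a given unstable leaf, let alone densely; the argument really uses that $D$ intersects almost every unstable leaf in a dense subset, a leafwise density that is automatic in the applications at hand (where $D$ is built from a full-measure invariant set via Lemma \ref{defini��o}) but would have to be verified separately in other settings. Once this is granted, the remainder is a routine combination of the cocycle identity and a standard shadowing estimate for partially hyperbolic periodic points.
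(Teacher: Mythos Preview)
Your approach is essentially the paper's: use the cocycle identity to turn the $\Delta^{\sigma}$-bound into a uniform bound on the difference of Birkhoff sums of $\log J^{\sigma}f$, then compare with the periodic data by choosing points of $D$ near $p$ and $q$. The paper's execution is a bit leaner: instead of invoking H\"older continuity and summing a geometric series over a shadowing window $N(\epsilon)\to\infty$, it first fixes $n$ large enough that $\bigl(\tfrac{1-\epsilon}{1+\epsilon}\,e^{\delta}\bigr)^n > K^2$ (where $\delta=\lambda^{\sigma}(p)-\lambda^{\sigma}(q)$) and then uses ordinary continuity of $z\mapsto J^{\sigma}f^n(z)$ to pick $x,y\in D$ near $p,q$ with $J^{\sigma}f^n(x)/J^{\sigma}f^n(y)>K^2$, yielding $\Delta^{\sigma}(f^n x,f^n y)>K$ directly --- no limit in $\epsilon$ is needed. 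Interestingly, the paper does not address your ``main obstacle'' at all: it simply takes $x,y\in D$ close to $p,q$ without arranging them on a common $\sigma$-leaf, so your discussion of leafwise density of $D$ is in fact more careful than the paper's own argument.
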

%
\subsection{Proof of Theorem \ref{thmrigidexponents}}

As we mentioned above (Lemma \ref{definição}) the strong  U.B.D.property of unstable foliation implies the desired boundedness condition and using proposition \ref{pop1} we conclude that $\lambda^u(f, x) = \lambda^u(A).$ Similarly, taking the inverse $f^{-1},$ U.B.D. property implies that $\lambda^u(f^{-1}, x) = \lambda^u(A^{-1}),$ it means $\lambda^s(f,x) = \lambda^s(A)$ and as $f$ is conservative $\lambda^c(f, x) = \lambda^c(A)$ too.

\subsection{ Proof of Proposition \ref{pop1}}
Take any $\sigma \in \{s,  u\}$ and suppose that $$ Z = \{ x \in \mathbb{T}^3| \lambda^{\sigma}(f,x) > \lambda^{\sigma}_A \}$$ has positive volume.  Let $\varepsilon > 0$ be a small number and  define
 $$A_n = \{ x \in Z| \; ||  J^{\sigma}f^m(x)|| > e^{m(\lambda^{\sigma}_A + \varepsilon )} \;\mbox{for all} \; m \geq n \}.$$

Take $n$ large enough such that

$$
m(A_n) > 0 \quad \text{ and }  \quad \frac{Q e^{n(\lambda^{\sigma}_A + \varepsilon)}}{2K^2e^{n\lambda^{\sigma}_A}}> 2.
$$
where $Q$ be as in definition (\ref{quasi isometric}) of quasi-isometric foliations (We know that stable, unstable and central foliations of partially hyperbolic diffeomorphisms in $\mathbb{T}^3$ are quasi-isometric.) and the constant $K$ is such that $$K^{-1} \leq \Delta^{\sigma} (f^n(y_1), f^n(y_2)) \leq K.$$ Similar to (\ref{produtofinito}) we get
\begin{equation} \label{produtosigmafinito}
K^{-2} \leq \prod_{i =0}^{n-1} \frac{J^{\sigma}f (f^i(y_1))}{J^{\sigma}f (f^i(y_2))} \leq K^2.
\end{equation} for any $n \in \mathbb{N}.$


Using   proposition \ref{H2} and lemma \ref{linalg}, choose $M > 0$ such that for any $y \in \mathcal{F}^{\sigma}_x$ and $d^{\sigma}(x,y) \geq M$

\begin{equation}  \label{universal1}
\frac{1}{2} < \frac{||f^nx - f^ny ||}{||A^n x - A^ny ||} < 2, \;  \; \end{equation}
\begin{equation} \label{universal2}
 \frac{1}{2}e^{n\lambda^{\sigma}_A}||x -y||< \frac{||A^n x - A^ny ||}{||x-y||} < 2e^{n\lambda^{\sigma}_A}||x -y||.
  \end{equation}


Take any regular point $x \in A_n.$ By definition we have $J^{\sigma} f^n(x) > e^{n(\lambda^{\sigma}_A + \epsilon)}$ and by (\ref{produtosigmafinito}) we get $$ J^{\sigma}f^{n} (y)  \geq \frac{1}{K^2} e^{n(\lambda^{\sigma}_A + \varepsilon)}$$ for any $y \in \mathcal{F}^{\sigma}_x.$
Now
\begin{equation}
\frac{||f^nx - f^ny ||}{|| A^nx - A^ny||} \geq \frac{Q d^{\sigma}(f^nx,f^ny)}{||A^nx - A^ny||} = \frac{Q \displaystyle \int_{\mathcal{F}^{\sigma}(B)}
|| D^{\sigma}f^n||d\lambda_{\mathcal{F}^{\sigma}_x}}{|| A^nx - A^ny||}\geq
\end{equation} \\\\
\begin{equation*}
 \geq \frac{Q e^{n(\lambda^{\sigma}_A + \varepsilon)} ||x - y||}{
2K^2e^{n\lambda^{\sigma}_A}||x - y||}\geq \frac{Q e^{n(\lambda^{\sigma}_A + \varepsilon)}}{2K^2e^{n\lambda^{\sigma}_A}}>2.
\end{equation*}
\\
which gives a contradiction.
Thus $\{ x \in \mathbb{T}^3| \lambda^{\sigma}(f,x) > \lambda^{\sigma}_A \}$ has zero volume. In the same way, considering $f^{-1},$ it comes out that  $$m(\{ x \in \mathbb{T}^3| \lambda^{\sigma}(f^{-1},x) > \lambda^{\sigma}_{A^{-1}} \}) = m(\{ x \in \mathbb{T}^3| \lambda^{\sigma}(f,x) < \lambda^{\sigma}_A \}) =0.$$

\subsection{Proof of Proposition \ref{pop2}}

\begin{proof} Suppose that there are periodic points $p,q,$ such that $\lambda^{\sigma}(p) > \lambda^{\sigma}(q).$
 Without lost of generality, suppose that $p,q$ are fixed points for $f$ and fix $\delta > 0$ such that $\lambda^{\sigma}(p) > \lambda^{\sigma}(q) + \delta.$ By hypothesis there exists $K > 1$ such that
 $$ K^{-1} \leq  \Delta^{\sigma}(f^n(x), f^n(y)) \leq K$$

 Choose small $\epsilon > 0$  such that $ \frac{1-\epsilon}{1+\epsilon} e^{\delta} > 1$ and then $n$ big enough such that
 \begin{equation} \label{quadrado}
 \left(\frac{1-\epsilon}{1+\epsilon} e^{\delta}\right)^n > K^2.
 \end{equation}

 Now take $x, y$ close to $p$ and $q$ such that
 \begin{equation} \label{mesmo}
J^{\sigma}f^k(x) \geq (1-\varepsilon)^k e^{k\lambda^{c}(p)}  \quad \text{and} \quad (1 + \varepsilon)^ke^{k\lambda^{c}(q)} \geq J^{\sigma}f^k(y) ,
\end{equation}

By \ref{quadrado} and \ref{mesmo} we get
$$
 \frac{J^{\sigma}f^n(x)}{J^{\sigma}f^n(y)} > K^2
$$
and so
 $$\Delta^{\sigma}(f^{n}(x), f^{n}(y)) =  \frac{J^{\sigma}f^n(x)}{J^{\sigma}f^n(y)}  \Delta^{\sigma}(x,y)  > K$$
which is a contradiction with the hypothesis.

%
%
%
%
%
%
%
%
%

\end{proof}

%
%
%


\section{U.B.D. Property and Rigidity for Anosov Diffeomorphisms}

In this section we prove theorem \ref{thmrigidity}.

\begin{proof}
We divide the proof into two steps where in the first step we prove constancy of the unstable periodic data and in the second one we deal with the central periodic data. The proof of the second step takes almost all of this section.

If $\mathcal{F}^u$ has the U.B.D. property, by density of $\mathcal{F}^u$ and Proposition \ref{pop2} the unstable periodic data is constant, i.e. $\lambda^u(p)= \lambda^u(q)$ for any two periodic point $p, q.$
By Anosov Closing lemma  this implies that $\lambda^u_f = \lambda^u_f(p)$ for a.e. $x,$ where $p$ is a periodic point. Again
since $\mathcal{F}^u$ has U.B.D property, by theorem \ref{thmrigidexponents} $\lambda^u_f = \lambda^u_A$ for a.e. point. Then $\lambda^u_f(p) = \lambda^u_A$ for all periodic point $p$ for $f.$

Now we claim that the central periodic data is also constant. To prove this we study Lipschitzness of the central holonomy inside center-unstable plaques. In what follows  $\Delta^c (x, y)$ is defined similar to $\Delta^u$ for the points $x, y$ on the same weak unstable leaf. It is clear that the propositions \ref{pop1} and \ref{pop2} can be applied for $\sigma = c.$

\begin{lemma}
There exists $C > 0$ such that for any $x \in \mathbb{T}^3$ and $y \in \mathcal{F}_x^u$ and any unstable leaf inside $\mathcal{F}^{cu}(x)$ the central holonomy satisfies:
$$
 C^{-1} d^u(x, y) \leq d^u(h(x), h(y)) \leq C d^u(x, y)
$$
\end{lemma}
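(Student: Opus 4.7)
The plan is to reduce the problem to the distant-points setting of Proposition \ref{distantlip} by iterating forward under $f$. When $d^u(x,y)\geq 1$ there is nothing to prove. Otherwise, I select the smallest $n\geq 0$ with $d^u(f^n x, f^n y) \geq 1$; such $n$ is finite by strong unstable expansion, and by minimality $d^u(f^n x, f^n y)$ is also bounded above. Since $h$ commutes with $f$ (both $\mathcal{F}^u$ and the center foliation are $f$-invariant), Proposition \ref{distantlip} applied to the pair $(f^n x, f^n y)$ yields
\[
C_0^{-1} \leq \frac{d^u(f^n h(x), f^n h(y))}{d^u(f^n x, f^n y)} \leq C_0,
\]
with $C_0$ independent of $x,y$ and of the chosen holonomy target.

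To pull this back I need two ingredients. The first is standard bounded distortion along strong unstable leaves: since $f$ is $C^2$ and $E^u$ is H\"older, and since forward iterates along $\mathcal{F}^u$ separate with exponential speed up to time $n$, one obtains a uniform $D>1$ with
\[
d^u(f^n x, f^n y) \;\asymp\; J^u f^n(x)\, d^u(x, y), \qquad d^u(f^n h(x), f^n h(y)) \;\asymp\; J^u f^n(h(x))\, d^u(h(x), h(y)),
\]
with implicit constants in $[D^{-1},D]$, uniformly in $(x,y,n)$. The second ingredient is a bound on the ratio $J^u f^n(x)/J^u f^n(h(x))$, where $x$ and $h(x)$ lie on a common center leaf, possibly far apart. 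For this I appeal to the Livshitz theorem: the first step of the proof of Theorem \ref{thmrigidity} already established $\lambda^u_f(p) = \lambda^u(A)$ for every periodic $p$, and since $f$ is a transitive Anosov diffeomorphism with H\"older $\log J^u f$, Livshitz produces a H\"older $\phi$ with $\log J^u f = \lambda^u(A) + \phi\circ f - \phi$. Telescoping immediately gives $|\log(J^u f^n(z)/J^u f^n(w))| \leq 4\|\phi\|_\infty$ for every $z,w\in\mathbb{T}^3$ and $n\geq 1$, a bound uniform in all arguments.

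Combining the three estimates,
\[
\frac{d^u(h(x), h(y))}{d^u(x, y)} \;\asymp\; \frac{J^u f^n(x)}{J^u f^n(h(x))} \cdot \frac{d^u(f^n h(x), f^n h(y))}{d^u(f^n x, f^n y)},
\]
so the left-hand side is bounded above and below by constants depending only on $C_0$, $D$, and $\|\phi\|_\infty$, yielding the claimed uniform bi-Lipschitz bound. The main obstacle is precisely the middle factor: U.B.D.\ of $\mathcal{F}^u$ controls Jacobian ratios along a single strong unstable leaf but not across different strong unstable leaves, so the periodic-data rigidity must be converted into something pointwise. The Livshitz cohomological equation is what performs this conversion, turning the constancy of unstable periodic data (delivered by Proposition \ref{pop2} from U.B.D.) into uniform control of forward Birkhoff sums; once that is in place, the remainder is a routine forward-iteration plus bounded-distortion argument.
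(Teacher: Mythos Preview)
Your argument is correct and follows essentially the same route as the paper: iterate forward until $d^u(f^n x,f^n y)\geq 1$, apply Proposition~\ref{distantlip} there, and pull back using Livschitz (applied to $\log J^u f$, which is cohomologous to a constant by the constancy of unstable periodic data established just before this lemma). The only cosmetic difference is that the paper encodes the bounded-distortion step via Gogolev's pseudo-distance $d^u(x,y)=\int_x^y \Delta^u(x,z)\,d\lambda_{\mathcal F^u_x}(z)$, for which $d^u(f(x),f(y))=J^u f(x)\,d^u(x,y)$ holds exactly, whereas you use the standard bounded-distortion estimate; these are equivalent devices.
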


\begin{proof}
By proposition \ref{distantlip} central holonomy is bi-Lipschitz for distant points. But the constancy of unstable Lyapunov exponent on all periodic points implies that  in fact  the holonomy is bi-Lipshitz. The proof appears in Gogolev result and we just recall it here. Take $y \in \mathcal{F}^u_x$ and $h(x), h(y)$ the images by central holonomy. With an abuse of notation we denote by $d^u$ the pseudo distance along the weak unstable leaves given by $d^u(x, y):= \displaystyle{ \int_x^y \Delta^c(x,z) d \lambda_{\mathcal{F}_x^c}(z) }$ as used in \cite{Go}.  Observe that $d^u(f(x), f(y)) = J^uf(x) d^u(x, y).$  Let $n \in \mathbb{N}$ such that $d^u(f^n(x), f^n(y)) \geq 1$ then
$$
 \frac{d^u(h(f^n(x)), h((f^n(y)))}{d^u(f^n(x), f^n(y))} = \frac{J^u(h(f^n(x)))}{J^u(f^n(x))} \frac{d^u(h(x), h(y))}{d^u(x,y)}
$$

Now using Livschitz we conclude that $J^uf$ is cohomologous to a constant function and the transfer function is continuous and consequently $\displaystyle{ \frac{D^u(h(f^n(x)))}{D^u(f^n(x))}}$ is bounded. By the choice of $n$ and proposition \ref{distantlip} the left hand side of the above equality is also bounded. So we conclude that $ \displaystyle \frac{d^u(h(x), h(y))}{d^u(x,y)}$ is bounded independent of $d^u(x,y)$ and independent of the length of central plaque jointing $x$ to $h(x).$
\end{proof}
Now we estimate the Lipschitz constant as follows. \begin{lemma}
Let $\mathcal{F}^{cu}_x$ be a leaf and $h$ the central holonomy inside it. Restricting the domain of $h$ to an small segment $[x, a]$, the Lipschitz constant of the holonomy is compared with $\Delta^c(x, y).$
\end{lemma}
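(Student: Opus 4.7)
The plan is to exploit the $f$-invariance of the central holonomy inside $\mathcal{F}^{cu}$, namely $f^n\circ h = h^{(n)}\circ f^n$, to reduce the estimate to the distant-points regime of Proposition~\ref{distantlip}. First I would choose $n$ large enough that $d^u(f^n(x), f^n(a))\geq 1$; then Proposition~\ref{distantlip} gives $d^u(f^n h(x), f^n h(a))\asymp d^u(f^n x, f^n a)$ with a universal constant, and the mean value theorem applied to $J^u f^n$ on each of the two small segments yields, after letting $a\to x$,
\begin{equation*}
\frac{d^u(h(x), h(a))}{d^u(x,a)}\;\asymp\; \frac{J^u f^n(x)}{J^u f^n(h(x))}
\end{equation*}
uniformly in $n$ large, with implicit constant depending only on the constant from Proposition~\ref{distantlip}.

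Next I would feed in the Livschitz cohomology for $J^u f$ obtained in the first part of the theorem: constancy of the $u$-periodic data together with H\"older regularity of $J^u f$ gives $\log J^u f = \lambda^u_A + \phi\circ f - \phi$ for some continuous $\phi$. Substituting yields $J^u f^n(x)/J^u f^n(h(x)) = e^{\phi(h(x))-\phi(x)}\cdot e^{\phi(f^n x)-\phi(f^n h(x))}$, the second factor being uniformly bounded by $e^{2\|\phi\|_\infty}$. Telescoping the cohomological equation along backward orbits, using that $d(f^{-i}x, f^{-i}h(x))\to 0$ because the weak unstable contracts under $f^{-1}$, and continuity of $\phi$, identifies this exponential with the extended Pesin--Sinai--Ledrappier product
\begin{equation*}
\Delta^u(x,h(x))\;:=\;\prod_{i=1}^\infty \frac{J^u f(f^{-i}x)}{J^u f(f^{-i}h(x))}\;=\;e^{\phi(x)-\phi(h(x))},
\end{equation*}
which still converges by H\"older continuity of $J^u f$ even though $h(x)$ sits on the central leaf rather than on the strong unstable of $x$.

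Finally I would close the loop via volume preservation: the pointwise identity $J^s f\cdot J^c f\cdot J^u f\equiv 1$ on $\mathbb{T}^3$ turns the convergent backward products into the multiplicative relation $\Delta^u(x,h(x))\,\Delta^c(x,h(x))\,\Delta^s(x,h(x))=1$, and $\Delta^s(x,h(x))$ is uniformly bounded (its logarithm is a summable series of H\"older differences, independent of $d^c(x,h(x))$). Combining this with the previous step,
\begin{equation*}
\frac{d^u(h(x), h(a))}{d^u(x,a)}\;\asymp\;\frac{1}{\Delta^u(x,h(x))}\;=\;\Delta^c(x,h(x))\,\Delta^s(x,h(x))\;\asymp\;\Delta^c(x,h(x)),
\end{equation*}
which is the comparison claimed in the lemma.

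The main obstacle is justifying that the backward products $\Delta^u$ and $\Delta^s$, classically written for $y$ lying on the strong unstable or stable manifold of $x$, still converge and obey the volume-preservation product identity when $y=h(x)$ is only on the central leaf. Both points reduce to exponential backward contraction of weak unstable leaves combined with H\"older regularity of each $J^\sigma f$, but the bounds are not uniform in the central distance $d^c(x,h(x))$ and the telescoping argument for $\Delta^u$ has to be carried out with care so that the constants fed into the final chain of comparisons remain universal.
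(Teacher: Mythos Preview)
Your approach is genuinely different from the paper's, and the first half of it is correct and illuminating: pushing forward by $f^n$, invoking Proposition~\ref{distantlip}, and using the Livschitz transfer function $\phi$ for $\log J^u f$ does give the clean identity
\[
\frac{d^u(h(x),h(a))}{d^u(x,a)}\;\asymp\;e^{\phi(h(x))-\phi(x)}\;=\;\frac{1}{\Delta^u(x,h(x))}
\]
with constants depending only on $\|\phi\|_\infty$ and the constant of Proposition~\ref{distantlip}. This is a nice dynamical derivation and recovers, in particular, the uniform bi-Lipschitz bound of the preceding lemma.

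The gap is exactly where you flag it, and it is fatal rather than a matter of care. From volume preservation you get $\Delta^u\cdot\Delta^c\cdot\tilde\Delta^s=1$ only if all three are taken as \emph{backward} products along the weak unstable leaf; for such $\tilde\Delta^s(x,h(x))$ one has $|\log\tilde\Delta^s|\leq \sum_{i\geq 1}C\,d(f^{-i}x,f^{-i}h(x))^\alpha$, and since the weak-unstable contraction only kicks in once $d^c(f^{-i}x,f^{-i}h(x))$ is small, the number of ``bad'' initial terms is of order $\log d^c(x,h(x))$. Hence $|\log\tilde\Delta^s(x,h(x))|$ grows like $\log d^c(x,h(x))$, not uniformly bounded. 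At this stage of the argument you have no cohomological information on $J^sf$ or $J^cf$ (that is precisely what the lemma is meant to unlock), so there is no way to trade $1/\Delta^u$ for $\Delta^c$ with uniform constants. Your chain of comparisons therefore proves only $\text{Lip}(h)\asymp\Delta^c\cdot\tilde\Delta^s$, which is not the statement needed for Corollary~\ref{poslipschitz}.

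The paper avoids this entirely by never passing through $\Delta^u$ or $\Delta^s$: it works inside a single $cu$-leaf, builds two thin $c$-by-$u$ rectangles $R_1,R_2$ over $[x,a]_u$ and $[y,h(a)]_u$ of the same $c$-width $\varepsilon$, and compares their two-dimensional Lebesgue areas. Disintegrating $\lambda_{\mathcal{F}^{cu}}$ along the center (weak unstable) sub-foliation and invoking the Pesin--Sinai density formula directly for $\Delta^c$ gives $\lambda_{\mathcal{F}^{cu}}(R_2)/\lambda_{\mathcal{F}^{cu}}(R_1)\sim\Delta^c(x,y)$, while the elementary ``area $\approx$ base $\times$ height'' estimate gives the same ratio as $d^u(y,h(a))/d^u(x,a)$. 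All constants here come from angle bounds between $E^c$ and $E^u$ and from Lemma~\ref{obvio}, which are manifestly uniform in $d^c(x,y)$. This geometric route is what you are missing.
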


\begin{corollary} \label{poslipschitz}
There exists a constant $D > 0$ such that for any $x$ and any $y \in \mathcal{F}_x^c$
$$
\Delta^c(f^n(x), f^n(y)) = O(1)
$$ and in particular all periodic points of $f$ have the same central Lyapunov exponent.
\end{corollary}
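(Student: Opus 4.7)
The plan is a short synthesis of the two preceding lemmas with Proposition \ref{pop2} applied to $\sigma=c$.

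First I would combine the two lemmas just stated to obtain
$$
K^{-1} \leq \Delta^c(x,y) \leq K
$$
for every $x \in \mathbb{T}^3$ and every $y \in \mathcal{F}^c_x$, with $K$ independent of the choice of $x$, of $y$, and of the length of the center arc joining them. The first lemma provides a universal bi-Lipschitz constant $C$ for the central holonomy $h$ between any two unstable leaves inside a common cu-leaf, \emph{independently} of the length of the center segment realising $h$. The second lemma identifies, up to a multiplicative constant, the local Lipschitz constant of $h$ at a base point $x$ with $\Delta^c(x,h(x))$. Chaining the two transfers the uniform bound $C$ into the desired $L^\infty$-control of $\Delta^c$ along each center leaf.

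Next I would invoke $f$-invariance of $\mathcal{F}^c$. Since $f^n(y)\in\mathcal{F}^c_{f^n(x)}$ whenever $y\in\mathcal{F}^c_x$, applying the inequality above at the new base point $f^n(x)$ gives
$$
K^{-1} \leq \Delta^c(f^n(x),f^n(y)) \leq K
$$
uniformly in $n$, in $x$, and in $y \in \mathcal{F}^c_x$. This is the first assertion of the corollary.

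Finally I would feed this uniform bound into Proposition \ref{pop2}, taken with $\sigma=c$ and the dense set $D=\mathbb{T}^3$. Although Propositions \ref{pop1} and \ref{pop2} are stated for $\sigma \in \{s,u\}$, their proofs use only the multiplicative identity $\Delta^\sigma(f^n y_1, f^n y_2) = \frac{J^\sigma f^n(y_1)}{J^\sigma f^n(y_2)}\,\Delta^\sigma(y_1,y_2)$, together with the quasi-isometry of $\mathcal{F}^\sigma$ and the asymptotic geometry of Proposition \ref{H2} and Lemma \ref{linalg}. In the Anosov setting all three ingredients remain available in the central direction, because $E^c = E^u_f$ is an unstable bundle of $f$ itself, so the product defining $\Delta^c$ converges by the analogue of Lemma \ref{obvio}, and $\mathcal{F}^c$ is quasi-isometric on $\mathbb{T}^3$ by Hammerlindl. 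Proposition \ref{pop2} then delivers constancy of the central Lyapunov exponent on all periodic orbits of $f$.

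The main obstacle is in the first step, namely promoting the \emph{local} comparison between the Lipschitz constant of $h$ and $\Delta^c$ into a bound on $\Delta^c(x,y)$ valid for arbitrarily distant $y \in \mathcal{F}^c_x$. This is precisely where the uniformity in the first lemma is crucial: the bi-Lipschitz constant of $h$ does not depend on the length of the center arc connecting the two unstable leaves, and this is exactly what lets the local comparison with $\Delta^c(x,h(x))$ propagate uniformly along the entire center leaf.
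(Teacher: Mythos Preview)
Your proposal is correct and follows essentially the same route as the paper: combine the uniform bi-Lipschitz bound on the central holonomy with the comparison lemma to get a uniform two-sided bound on $\Delta^c(x,y)$, then replace $x,y$ by $f^n(x),f^n(y)$ and invoke Proposition~\ref{pop2} for $\sigma=c$. The paper's only additional remark is to cite the density of the center foliation when applying Proposition~\ref{pop2}; since in this Anosov setting $\mathcal{F}^c$ is the weak unstable foliation and hence every leaf is dense, your choice $D=\mathbb{T}^3$ amounts to the same thing.
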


\begin{proof} (of lemma)
Consider $y = h(x)$ (see figure $1.$) and take $\varepsilon > 0$  such that $\varepsilon \ll \lambda_{\mathcal{F}^u}([x,a]).$ Build rectangles $R_1, R_2$ such that $\lambda_{\mathcal{F}^c_z}(\mathcal{F}^c_z \cap R_i) = \varepsilon, i = 1,2.$ The Lebesgue measure of center-unstable leaf $\lambda_{\mathcal{F}^{cu}}$ desintegrate to conditional measures on central leaves which are absolutely continuous with respect to $\lambda_{\mathcal{F}^c(.)}$ and $\hat{\lambda}_{\mathcal{F}^{cu}}$ is the quotient measure:

\begin{equation}\label{rokhlin}
\lambda_{\mathcal{F}^{cu}} (R_i) = \int_{[x, a]_u} d \hat{\lambda}_{\mathcal{F}^{cu}}(z) \int_{\mathcal{F}^c_z\cap R_i} \rho_z(t_i) d\lambda_{\mathcal{F}^c_z} (t_i), \quad i =1, 2.
\end{equation}

\begin{center} \label{lipo}
\begin{figure}[ht]
\centering{

\psfrag{R1}{\footnotesize$R_1$}
\psfrag{R2}{\footnotesize$R_2$}
\psfrag{x}{\footnotesize$x$}
\psfrag{a}{\footnotesize$a$}
\psfrag{y}{\footnotesize$y$}
\psfrag{ha}{\footnotesize$h(a)$}
\psfrag{z}{\footnotesize$z$}
\psfrag{h(z)}{\footnotesize$h(z)$}
\psfrag{t1}{\footnotesize$t_1$}
\psfrag{t2}{\footnotesize$t_2$}

\includegraphics[scale=0.4]{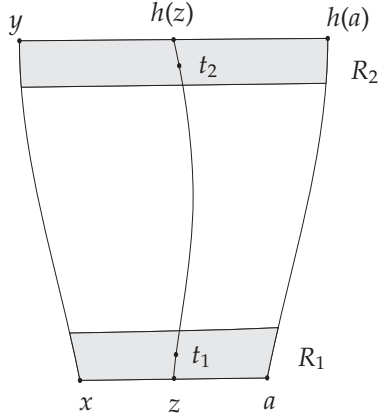}
}
\caption{Central holonomy inside center-unstable leaf}
\end{figure}
\end{center}

We know that (\cite{PS}) for $z_i \in \mathcal{F}^c_z,$
\begin{equation} \label{densities}
\frac{\rho_z(t_1)}{\rho_z(z)} = \Delta^c(z, t_1)  \quad \text{and} \quad \frac{\rho_z(t_2)}{\rho_z(z)} = \Delta^c(z, t_2) = \Delta^c(z, h(z)) \Delta^c(h(z), t_2).
\end{equation}

As $z, h(z)$ belongs respectively to the local unstable leaf of $x$ and $y$ and the center holonomies are uniform bi-Lispchitz, we have \begin{equation}  \label{unstable}
\Delta^c (z, h(z)) \sim \Delta^c(x, y).
\end{equation}

Substituting,
$$
\int_{\mathcal{F}^c_z\cap R_i} \rho_z(t) d\lambda_{\mathcal{F}^c_z} (t)= \int_{\mathcal{F}^c_z\cap R_i} \rho_z(z) \Delta^c(z, t) d\lambda_{\mathcal{F}^c_z} (t) $$
in \ref{rokhlin} and using \ref{densities}, \ref{unstable} we have
$$
 \frac{\lambda_{\mathcal{F}^{cu}} (R_2)}{\lambda_{\mathcal{F}^{cu}} (R_1)} \sim  \Delta^c(x, y)  \times\frac{ \displaystyle \int_{[x, a]_u} d \hat{\lambda}_{\mathcal{F}^{cu}(z)} \int_{\mathcal{F}^c_{z}\cap R_2}  \Delta^c(h(z), t_2) d\lambda_{\mathcal{F}^c_{z}} (t_2) }{  \displaystyle \displaystyle \int_{[x, a]_u} d \hat{\lambda}_{\mathcal{F}^{cu}(z)} \int_{\mathcal{F}^c_{z}\cap R_1}  \Delta^c(z, t_1) d\lambda_{\mathcal{F}^c_{z}} (t_1) }
$$
Finally observe that $t_1, t_2$ belong to local central leaf of $z_1$ and $h(z_1).$ As central leaf is the weak unstable foliation we again use lemma \ref{obvio} to conclude that both $\Delta^c(z, t_1)$ and $\Delta (h(z), t_2)$ are close to one.
This implies that
$$
\frac{\lambda_{\mathcal{F}^{cu}} (R_2)}{\lambda_{\mathcal{F}^{cu}} (R_1)} \sim \Delta^c(x, y).
$$
As $\epsilon$ is small the Lebesgue measure of $R_i$ is compared with the product of base and length and consequently the above relation shows that $\Delta^c (x, y)$ is compared with the Lipschitz constant of the central holonomy.

\end{proof}

\begin{proof} (of corollary)
In the above lemma we have proved that $\Delta^c(x, y)$ is comparable with the Lipschitz constant of the central holonomy which is the same for all center-unstable plaques in the manifold. Just substitute $x$ and $y$ by $f^n(x)$ and $f^n(y).$ Now, use Proposition \ref{pop2} and density of center manifold to conclude that $\lambda^c(p)= \lambda^c(q)$ for any two periodic points $p, q.$
\end{proof}

Now as the central periodic data is constant by Anosov closing lemma we conclude that $\lambda^c(f, x) = \lambda^c(p)$ for any periodic point $p.$ Using  corollary \ref{poslipschitz}  and Proposition \ref{pop1} we conclude that $\lambda^c(f, x) = \lambda^c(A)$.

Since $f$ and $A$ are conservative diffeos

$$ 0 = \lambda^u_f(p) + \lambda^c_f(p) + \lambda^s_f(p) = \lambda^u_A + \lambda^c_A + \lambda^s_A \Rightarrow \lambda^s_f(p) =\lambda^s_A$$
for every periodic point $p$ of $f.$ So, $f$ and $A$ have the same periodic data, up to change of $f$ by $f^2.$
Using \cite{GoGu}, $f$ is $C^{1 + \theta}$ conjugated to $A,$ for some positive $\theta.$ \end{proof}

\begin{corollary} Let $f,A$ like in previous theorem, then are equivalents:

\begin{enumerate}
\item $\mathcal{F}^u$ has U.B.D. property,
\item $f$ is $C^{1 + \theta}$ conjugate to $A,$ up to change $f$ by $f^2,$
\item $f$ is differentialy conjugated to $A,$ up to change $f$ by $f^2,$
\item $f$ and $A$ have the same periodic data, up to change $f$ by $f^2.$
\end{enumerate}

\end{corollary}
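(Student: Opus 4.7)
The plan is to close the cycle $(1) \Rightarrow (2) \Rightarrow (3) \Rightarrow (4) \Rightarrow (1)$, since three of the four links are either already established or essentially immediate, and only $(4) \Rightarrow (1)$ requires real work. The implication $(1) \Rightarrow (2)$ is exactly the content of Theorem \ref{thmrigidity} that was just proved, so nothing new is required. The implication $(2) \Rightarrow (3)$ is trivial: $C^{1+\theta}$ regularity of the conjugacy is a stronger hypothesis than mere differentiability.

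For $(3) \Rightarrow (4)$, suppose $h$ is a $C^1$ conjugacy, $h \circ f = A \circ h$. For any periodic point $p$ of $f$ with $f^k(p) = p$, the chain rule applied to $h \circ f^k = A^k \circ h$ yields
$$Dh(p) \circ Df^k(p) = DA^k \circ Dh(p),$$
so $Df^k(p)$ and $DA^k$ are conjugate linear operators and hence have the same periodic data. (One passes to $f^2$ if necessary to rule out orientation-reversing issues in the partially hyperbolic splitting.)

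The only substantive step is $(4) \Rightarrow (1)$. The strategy is to first apply the Gogolev–Guysinskiy smooth conjugacy theorem recalled in the preliminaries: same periodic data, combined with the topological conjugacy of Anosov diffeomorphisms on the torus to their linearization (Franks–Manning), upgrades the conjugacy to $C^{1+\theta}$ regularity, up to replacing $f$ by $f^2$. With this smooth $h$ in hand, one pushes the U.B.D. property of $\mathcal{F}^u_A$ (which holds trivially because $A$ is linear, as observed in the introduction) forward to $\mathcal{F}^u_f$. Since $h$ maps $\mathcal{F}^u_A$ to $\mathcal{F}^u_f$ and is bi-Lipschitz with $C^1$ Jacobian bounded uniformly above and below on the compact torus, the pushforward of Lebesgue measure is equivalent to Lebesgue with bounded Radon–Nikodym derivative, and the same holds for the disintegrations along unstable plaques.

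The main delicate point I expect will be verifying that the bound defining U.B.D. is truly uniform across long foliated boxes of arbitrary size. One must disintegrate Lebesgue for $A$ along a long unstable plaque, push through by $h$, and compare with the disintegration for $f$ on the image plaque; the ratio picks up the Jacobian of $h|_{\mathcal{F}^u_A}$ divided by an analogous quotient Jacobian coming from the transverse directions. The uniformity in the size of the plaque is what is non-obvious, and the argument would rely on the global boundedness of $Dh$ and $Dh^{-1}$ together with the fact that long unstable plaques of $A$ and $f$ correspond under $h$ with uniformly comparable intrinsic lengths (by the $C^1$ bi-Lipschitz property). Once this book-keeping is done, the U.B.D. constant for $\mathcal{F}^u_f$ is obtained from that for $\mathcal{F}^u_A$ multiplied by a universal constant depending only on $\|h\|_{C^1}$ and $\|h^{-1}\|_{C^1}$, closing the cycle.
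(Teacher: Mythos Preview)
Your cycle $(1)\Rightarrow(2)\Rightarrow(3)\Rightarrow(4)\Rightarrow(1)$ is correct and is exactly the intended argument; the paper states this corollary without proof, since $(1)\Rightarrow(2)$ is Theorem~\ref{thmrigidity}, $(4)\Rightarrow(2)$ is the Gogolev--Guysinsky theorem quoted in the preliminaries, $(2)\Rightarrow(3)\Rightarrow(4)$ are standard, and $(2)\Rightarrow(1)$ is precisely the assertion made (also without proof) in the introduction that any $f$ which is $C^1$ conjugate to a linear automorphism inherits the U.B.D.\ property of the linear invariant foliations. Your sketch of $(4)\Rightarrow(1)$ via the bounded Jacobians of the $C^{1+\theta}$ conjugacy is the right computation: the density of the conditional measure on an $\mathcal{F}^u_f$-plaque with respect to normalized arc-length is, up to normalization, $|J^u h|/|Jh|$, which is uniformly bounded above and below on the compact torus, giving the U.B.D.\ constant independently of the size of the foliated box.
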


\section{Local maxima for Lyapunov exponents}

\subsection{Proof of theorem \ref{thm1}}

\begin{proof}
We prove the statement on $\lambda^u(f, \cdot).$ Suppose by contradiction that there is a  positive set $Z \in \mathbb{T}^3,$ such that, for every $x \in Z$ we have  $\lambda^u(f,x) > \lambda^u(A).$
Since $f$ is  $C^2,$ the unstable foliation $\mathcal{F}^u$ for $f$ is upper absolutely continuous, in particular there is a positive set $B$ such that for every point $x \in B$ we have

\begin{equation}
 \lambda_{\mathcal{F}^u_x}(\mathcal{F}^u_x \cap Z) > 0.
\label{1}
\end{equation}
Choose a $p \in B$ satisfying (\ref{1}) and $\varepsilon > 0$ a small number. Now consider a segment $[x,y]_u \subset \mathcal{F}^u_p $ satisfying
 $\lambda_{\mathcal{F}^u_p}([x,y]_u \cap Z) > 0$ such that the length of $[x,y]_u$ is bigger than $M$ as required in lemma \ref{linalg} and proposition \ref{H2}. We can choose $M$ such that

$$||Ax - Ay|| < (1 + \varepsilon)e^{\lambda^u_A } ||y -x|| $$

and

$$\frac{|| fx - fy|| }{ ||Ax - Ay||} < 1 + \varepsilon. $$

 whenever $d^u(x, y) \geq M.$ The above equation implies that $$ || fx - fy|| < (1+ \varepsilon)^2 e^{\lambda^u_A} || y - x||.$$

Inductively, we assume that for  $n \geq 1$ we have

\begin{equation}
|| f^nx - f^ny||< (1+\varepsilon)^{2n} e^{n \lambda^u_A }|| y - x||. \label{induction}
\end{equation}

 Since $f$ expands uniformly  on the $u-$direction we have $d^u(f^nx,f^ny) > M,$ it leads

\begin{eqnarray*}
|| f(f^nx) - f(f^ny)|| &<& (1+\varepsilon)|| A(f^nx) - A(f^ny)|| \\  &<& (1 + \varepsilon)^2 e^{\lambda^u_A} || f^nx - f^n y||\\ &<&
 (1+\varepsilon)^{2(n+1)} e^{(n+1)\lambda^u_A}.
\end{eqnarray*}

For each $n > 0,$ let $A_n \subset Z$ be the following set

$$A_n = \{ x \in Z \colon\;\; \|D^uf^k \| > (1+2\varepsilon)^{2k} e^{k\lambda^u_A} \;\; \mbox{for any} \;\; k \geq n\}. $$
We have $m(Z) > 0$ and $A_n \uparrow Z.$
Consider a big $n$ and  $\alpha_n > 0$ such that $ Leb_{\mathcal{F}^u_p} ([x,y]_u \cap A_n) = \alpha_n Leb_{\mathcal{F}^u_p} ([x,y]_u).$
Note that when $n $ increases to infinity the proportion $\alpha_n $ converges to $$ Leb_{\mathcal{F}^u_p} ([x,y]_u \cap Z) .$$ We can assume with lost generality $\alpha_n > \alpha_0 > 0$ for any $n > 1.$ Then

\begin{eqnarray}
||f^nx - f^ny || &>&  Q \displaystyle\int_{[x,y]_u \cap A_n} ||Df^n(z)|| d\lambda_{\mathcal{F}^u_p}(z) >  \\ &>&
 Q (1+ 2\varepsilon)^{2n}
e^{n \lambda_A^u } \lambda_{\mathcal{F}^u_p} ([x,y]_u \cap A_n) \\ &>& \alpha_0 Q (1 + 2\varepsilon)^{2n} e^{n\lambda^u_A} \|x-y\|. \label{conclusion}
\end{eqnarray}
The inequalities $(\ref{induction})$ and $(\ref{conclusion})$ give a contradiction. We conclude that $\lambda^u(f,x) \leq \lambda^u(A),$ for almost everywhere $x \in \mathbb{T}^3.$
Considering the inverse $f^{-1} $ we conclude that
$ \lambda^s(A) \leq  \lambda^s(f,x)$
for  almost every $x \in \mathbb{T}^3.$\end{proof}

The above arguments also can be used to prove similar statements for absolutely continuous central foliations of Anosov diffeomorphisms.

\begin{theorem}\label{thm2} Consider $f: \mathbb{T}^d \rightarrow \mathbb{T}^d,  d \geq 1,$ a $C^2$ Anosov, conservative with partially hyperbolic structure diffeomorphism,  such that $\dim E^c_f = 1.$ Let $A: \mathbb{T}^d \rightarrow \mathbb{T}^d, $ the linearization $f.$  Suppose that $\mathcal{F}^c$ is  quasi isometric, upper absolutely continuous foliation and $\lambda^c(f,\cdot) > 0,$ for a.e. $x\in \mathbb{T}^d, $ then $\lambda^c(f, \cdot) \leq \lambda^c_A,$ a.e. $x\in \mathbb{T}^d $
\end{theorem}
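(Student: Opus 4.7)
The plan is to mimic the proof of Theorem~\ref{thm1}, substituting the central foliation $\mathcal{F}^c$ for $\mathcal{F}^u$ and using the hypothesized quasi-isometric and upper absolutely continuous character of $\mathcal{F}^c$ in place of the properties that hold automatically for $\mathcal{F}^u$ on $\mathbb{T}^3$. The main new difficulty is that $E^c$ need not be uniformly expanded even when $\lambda^c(f,\cdot) > 0$ almost everywhere, so the induction on $n$ used in the proof of Theorem~\ref{thm1} is not directly available. I bypass this by doing a single comparison at a large iterate $n$, choosing the initial leaf segment long enough that Lemma~\ref{linalg} and Proposition~\ref{H2} apply at iterate $n$ directly, without having to propagate a lower bound inductively.

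Assume for contradiction that $Z := \{x \in \mathbb{T}^d : \lambda^c(f,x) > \lambda^c_A\}$ has positive Lebesgue measure. Upper absolute continuity of $\mathcal{F}^c$ produces a positive-measure set $B$ with $\lambda_{\mathcal{F}^c_p}(\mathcal{F}^c_p \cap Z) > 0$ for every $p \in B$. Fix $\epsilon > 0$ small and $n \in \mathbb{N}$ large (to be chosen at the end). Let $M = M(n,\epsilon)$ be the common threshold required by Proposition~\ref{H2} and Lemma~\ref{linalg} at iterate $n$ with accuracy $1+\epsilon$. Using the quasi-isometric property of $\mathcal{F}^c$ in the universal cover, which implies that center leaves are unbounded, I pick $p \in B$ and a segment $[x,y]_c \subset \mathcal{F}^c_p$ satisfying $\lambda_{\mathcal{F}^c_p}([x,y]_c \cap Z) > 0$ and $\|x - y\| > M$.

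Two competing estimates on $\|f^n x - f^n y\|$ will yield the contradiction. From Lemma~\ref{linalg} and Proposition~\ref{H2},
$$\|f^n x - f^n y\| \leq (1+\epsilon)\,\|A^n x - A^n y\| \leq (1+\epsilon)^2 e^{n\lambda^c_A}\,\|x - y\|.$$
For the reverse direction, introduce
$$A_n := \{z \in Z : \|D^c f^k(z)\| > (1+2\epsilon)^{k} e^{k\lambda^c_A} \text{ for every } k \geq n\};$$
since $A_n \uparrow Z$, for $n$ large the proportion $\alpha := \lambda_{\mathcal{F}^c_p}([x,y]_c \cap A_n)/\lambda_{\mathcal{F}^c_p}([x,y]_c)$ is bounded below by a positive constant. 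Using the quasi-isometric constant $Q$ and integrating,
$$\|f^n x - f^n y\| \geq Q\,d^c(f^n x, f^n y) = Q \int_{[x,y]_c} \|D^c f^n(z)\|\, d\lambda_{\mathcal{F}^c_p}(z) \geq Q\,\alpha\,(1+2\epsilon)^n e^{n\lambda^c_A}\,\|x - y\|.$$
Combining the two bounds yields $(1+\epsilon)^2 \geq Q\,\alpha\,(1+2\epsilon)^n$, which fails for $n$ large enough, producing the contradiction.

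The step I expect to be most delicate is the one where the geometric hypotheses on $\mathcal{F}^c$ are actually used: selecting a leaf segment that is simultaneously long enough in the ambient metric (so that Proposition~\ref{H2} gives the desired accuracy at iterate $n$) and still has positive leaf-measure intersection with $Z$. For $\mathcal{F}^u$ in Theorem~\ref{thm1} this came essentially for free, since uniform expansion lets one iterate a short initial segment inside $Z$ to an arbitrarily long one while preserving the leaf-measure intersection. Here uniform expansion on $E^c$ is unavailable, so I must work in the universal cover, use the quasi-isometric hypothesis to guarantee that center leaves are unbounded, and invoke upper absolute continuity only to secure the initial positive leaf-measure intersection with $Z$; extending the chosen segment along the leaf then provides the required ambient length.
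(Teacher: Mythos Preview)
Your approach has a genuine gap, and the paper's route is both simpler and avoids it.

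You assert that ``$E^c$ need not be uniformly expanded even when $\lambda^c(f,\cdot)>0$ almost everywhere,'' and then build an elaborate workaround. But in the Anosov setting this is false. Since $f$ is Anosov, the partially hyperbolic bundle $E^c_f$ must lie entirely inside either the Anosov stable or the Anosov unstable bundle. If it were in the stable part we would have $\lambda^c(f,\cdot)<0$ everywhere, contradicting the hypothesis $\lambda^c(f,\cdot)>0$ a.e. Hence $E^c_f\subset E^{u,\mathrm{An}}_f$ and there exists $\mu>1$ with $\|D^cf(x)\|>\mu$ for every $x$. This is exactly what the paper observes; once uniform expansion on $E^c$ is in hand, the proof of Theorem~\ref{thm1} applies verbatim (the induction step $d^c(f^nx,f^ny)>M$ persists for all $n$ because of uniform expansion), with $\mathcal{F}^u$ replaced by $\mathcal{F}^c$ and the quasi-isometric and upper-absolute-continuity hypotheses substituting for the automatic properties of $\mathcal{F}^u$.

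Your single-iterate workaround, by contrast, does not close. The threshold $M$ furnished by Proposition~\ref{H2} and Lemma~\ref{linalg} depends on the iterate $n$, so the segment $[x,y]_c$ you choose (with $\|x-y\|>M(n)$) varies with $n$. Your claim that ``since $A_n\uparrow Z$, for $n$ large the proportion $\alpha$ is bounded below'' is only valid for a \emph{fixed} segment; for a family of segments whose length goes to infinity with $n$, the proportion $\lambda_{\mathcal{F}^c_p}([x,y]_c\cap A_n)/\lambda_{\mathcal{F}^c_p}([x,y]_c)$ can tend to zero. If instead you try to keep the \emph{absolute} intersection $\lambda_{\mathcal{F}^c_p}([x,y]_c\cap A_n)\ge\beta_0>0$ and compare with the upper bound $(1+\varepsilon)^2 e^{n\lambda^c_A}\|x-y\|$, you are led to require $M(n)<C(1+2\varepsilon)^n$ for some $n$, and nothing in the stated propositions controls the growth of $M(n)$. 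So the contradiction does not follow from what you have written; the missing observation of uniform center expansion is precisely what makes the inductive scheme of Theorem~\ref{thm1} go through with $M$ fixed once and for all.
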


\begin{proof} By \cite{H}  we have $A$ is partially hyperbolic and $\dim E^c_f = \dim E^c_A.$ Since $f$ is Anosov with partially hyperbolic structure, then by propositions $\ref{H1}$ and $\ref{H2},$ we have that $A$ is Anosov and $\lambda^c_A > 0.$ for a.e.  Furthermore there is $\mu > 1,$ such that $|| D^cf(x)|| > \mu,$ for any $x \in \mathbb{T}^3.$ In the other words, $f$ has uniform expansion in the center direction. Since $\mathcal{F}^c_f$ is   quasi isometric, we can apply the same argument of the previous theorem, and we conclude $\lambda^c(f, \cdot) \leq \lambda^c_A,$ a.e. $x\in \mathbb{T}^d. $ \end{proof}

\section{Topology of central leaves}

To prove  theorem \ref{corollary1} we show that U.B.D property  of central foliation implies that for almost every $x$ we have $\lambda^c(x) = \lambda^c_A.$ Consequently $A$ has compact central leaves and by leaf conjugacy between $f$ and $A$ the same is true for $f.$

The idea of the proof is similar to that of theorem \ref{thmrigidexponents}. However a main difficulty here is that we do not know have a formula for the density of the desintegration along central foliation. Let $$Z = \{  x \in \mathbb{T}^3 | \lambda^c(f, x) > \lambda^c_A\}$$
and  $\varepsilon > 0$ be a small number. Define
 $$A_n = \{ x \in Z| \; ||  Df^m(x)|E^{c}_x || > e^{m(\lambda^{c}_A + \varepsilon )} \;\mbox{for all} \; m \geq n \}.$$

Take $n$ large enough such that

\begin{equation} \label{largen}
m(A_n) > \alpha  \quad \text{ and }  \quad \frac{Q\alpha e^{n(\lambda^{\sigma}_A + \varepsilon)}}{4Ke^{n\lambda^{\sigma}_A}}> 2.
\end{equation}
for some positive $\alpha> 0$ which will be fixed.
We choose $M$ satisfying \ref{universal1} and \ref{universal2} for $\sigma = c$.

Now the idea is to find a central plaque $\mathcal{F}^c_x$ of size $M$ such that $Leb_x(A_n) \geq \alpha /2K.$ Of course, if we could provide a measurable partition of $M$ into plaques of size $M$ by Rokhlin decomposition we would get a plaque such that $m_x(A_n) \geq \alpha$ and by definition $Leb_x(A_n) \geq \alpha/K.$ As we ignore the existence of such partition we construct a partition covering of a large measurable subset of $\mathbb{T}^3$ in the next subsection.

Let us complete the proof of the theorem assuming the existence of such plaque. The idea is to get the same contradiction as in the proof of theorem \ref{thmrigidexponents}. More precisely, we get similar to $(\ref{conclusion})$

\begin{equation}
\frac{||f^nx - f^ny ||}{|| A^nx - A^ny||} \geq \frac{Q \displaystyle \int_{\mathcal{F}^{c}_x(B)}
|| Df^n|E^{c}||d\lambda_{\mathcal{F}^c_x}}{|| A^nx - A^ny||} \geq
\end{equation}
\begin{equation*}
 \geq\frac{Q \alpha e^{n(\lambda^{c}_A + \varepsilon)} ||x - y||}{
4Ke^{n\lambda^{\sigma}_A}||x - y||}\geq \frac{Q \alpha e^{n(\lambda^{\sigma}_A + \varepsilon)}}{4Ke^{n\lambda^{\sigma}_A}}>2.
\end{equation*}

To find such a plaque we need to construct a measurable partition by plaques as follows.
\subsubsection{\bf Measurable partition by long plaques}

  It is more convenient to work in the universal covering $ \pi: \mathbb{R}^3 \rightarrow \mathbb{T}^3.$
We lift the foliations to $\mathbb{R}^3$ and use the same notations $\mathcal{F}^{\sigma}_x$ for the leaf passing through $x$ in $\mathbb{R}^3$.  First we recall a nice property of central foliation proved in Hammerlind:

\begin{proposition}
There is a constant $R_c$ such that for all $x \in \mathbb{R}^3$, $\mathcal{F}_{x}^c \in U_{R_c}(\mathcal{A}_{x}^{c})$ where $U_{R_c} (\mathcal{A}_{x}^{c})$ denote the neighbourhood of radius $R_c$ around the central leaf of $A$ through $x.$
\end{proposition}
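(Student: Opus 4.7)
The plan is to derive the proposition directly from the leaf conjugacy theorem of Hammerlindl (Theorem \ref{H3}) together with the linearity of the central foliation of $A$ on the universal cover. Theorem \ref{H3} supplies a homeomorphism $h: \mathbb{T}^3 \to \mathbb{T}^3$ sending leaves of $\mathcal{F}^c_f$ onto leaves of $\mathcal{A}^c$, whose lift $\widetilde{h}:\mathbb{R}^3 \to \mathbb{R}^3$ satisfies $\|\widetilde{h} - Id_{\mathbb{R}^3}\|_\infty \leq R$ for some constant $R > 0$, and this $R$ will essentially give us $R_c$.

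First I would fix $x \in \mathbb{R}^3$ and note that, since $\widetilde{h}$ is a lift of a leaf conjugacy, the image $\widetilde{h}(\mathcal{F}^c_x)$ is contained in a single leaf of the lifted $A$-central foliation, namely $\mathcal{A}^c_{\widetilde{h}(x)}$. Next, from $\|\widetilde{h}(y) - y\| \leq R$ for every $y \in \mathcal{F}^c_x$, each such $y$ lies within distance $R$ of $\mathcal{A}^c_{\widetilde{h}(x)}$. Finally, since the lifted $A$-central foliation in $\mathbb{R}^3$ consists of parallel affine lines in the direction $E^c_A$, the Hausdorff distance between $\mathcal{A}^c_{\widetilde{h}(x)}$ and $\mathcal{A}^c_x$ equals the length of the projection of $\widetilde{h}(x) - x$ onto a complement of $E^c_A$, which is bounded by $R$. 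Chaining these two estimates by the triangle inequality, every $y \in \mathcal{F}^c_x$ lies within distance $2R$ of $\mathcal{A}^c_x$, so one may take $R_c = 2R$.

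The only (mild) subtlety is verifying that $\widetilde{h}$ sends the entire connected leaf $\mathcal{F}^c_x$ into a \emph{single} leaf of the lifted linear foliation, rather than into a union of parallel lines; this follows from continuity of $\widetilde{h}$ together with the fact that $\mathcal{A}^c$ on $\mathbb{T}^3$ lifts to a genuine foliation of $\mathbb{R}^3$ by parallel lines. Since Theorem \ref{H3} is already stated as a leaf conjugacy with bounded displacement in the universal cover, no deeper obstruction appears and the argument reduces to the elementary comparison above.
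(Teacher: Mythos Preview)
Your argument is correct. The paper does not supply its own proof of this proposition: it simply quotes the statement as ``a nice property of central foliation proved in Hammerlindl'' and moves on. What you have done is derive it from another Hammerlindl result already recorded in the paper (Theorem~\ref{H3}), which is a clean way to make the exposition self-contained. The chain of inequalities---$y$ is within $R$ of $\widetilde{h}(y)\in\mathcal{A}^c_{\widetilde{h}(x)}$, and $\mathcal{A}^c_{\widetilde{h}(x)}$ is within $R$ of $\mathcal{A}^c_x$ because the linear central leaves are parallel lines---is exactly right, and your final paragraph correctly handles the only point that needs care, namely that $\widetilde{h}$ maps the connected set $\mathcal{F}^c_x$ into a single line rather than a union of translates. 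One cosmetic remark: when you write ``projection onto a complement of $E^c_A$'' you should say \emph{orthogonal} complement, since for a general linear complement the projection need not be a contraction; with that adjustment the bound $\leq R$ is immediate.
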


For  $M$ large enough, we take $D$ a ball centered at $ O \in \mathbb{R}^3$ of radius $M$, transverse to $\mathcal{F}^c$ and in the $su-$leaf of $A$. Now saturate $D$ by central plaques of size $M$ and let
$
 \hat{D} := \bigcup_{z \in D} \mathcal{F}_{z, M}^c.
$

\begin{lemma}
If $M$ is large enough there exists a plaque $\mathcal{F}_{z, M}^c$ such that $$Leb_{z} (\pi^{-1}(A_n)) \geq \alpha /2K.$$
\end{lemma}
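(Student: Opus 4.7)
The plan is to combine Rokhlin disintegration along central plaques with a translation-averaging argument in $\mathbb{R}^3$. First, for $M$ large (compared with $R_c$), the plaques $\{\mathcal{F}^c_{z,M}\}_{z \in D}$ form a measurable partition of $\hat{D}$: by Hammerlindl's proposition each $f$-central leaf lies in the $R_c$-tube of the corresponding $A$-central line, and since the $A$-central direction meets the $su$-leaf of $A$ transversely, for $M \gg R_c$ each $\mathcal{F}^c_{z,M}$ meets $D$ exactly at $z$ and plaques through distinct points of $D$ are disjoint. Rokhlin disintegration of $m|_{\hat{D}}$ with respect to this partition produces conditional probabilities $\{m_z\}_{z \in D}$, and the U.B.D.\ hypothesis gives $Leb_z(E) \geq m_z(E)/K$ for every measurable set $E$.

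Next, I vary the basepoint $O$ by translation. For $v \in \mathbb{R}^3$ let $D_v := D + v$ (still an $su$-disk of $A$ by linearity) and $\hat{D}_v := \bigcup_{z \in D_v}\mathcal{F}^c_{z,M}$. By the $R_c$ proximity, $\hat{D}_v$ sits in a fixed neighborhood of the genuine translate $T_v := D_v + [-M/2, M/2]\cdot e_c = T_0 + v$ (where $e_c$ spans $E_A^c$), and $m(\hat{D}_v)$ is comparable to $\mathrm{vol}(T_0)$ uniformly in $v$. A Fubini computation with $v$ ranging over a large cube $B_L \subset \mathbb{R}^3$ then yields
$$
\frac{1}{\mathrm{vol}(B_L)} \int_{B_L} m(\hat{D}_v \cap \pi^{-1}(A_n))\,dv \;\longrightarrow\; m(A_n)\cdot\mathrm{vol}(T_0) \quad \text{as } L \to \infty,
$$
with boundary error of order $\diam(T_0)/L$. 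Since $m(A_n) \geq \alpha$, for $L$ large some $v$ satisfies $m(\hat{D}_v \cap \pi^{-1}(A_n)) \geq (\alpha/2)\, m(\hat{D}_v)$; applying Rokhlin disintegration on $\hat{D}_v$ produces a plaque with $m_z(\pi^{-1}(A_n)) \geq \alpha/2$, and U.B.D.\ upgrades this to $Leb_z(\pi^{-1}(A_n)) \geq \alpha/(2K)$, as claimed.

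The principal obstacle is Step 1: verifying that $\{\mathcal{F}^c_{z,M}\}_{z \in D_v}$ is genuinely a measurable partition of $\hat{D}_v$. One has to rule out that a single $f$-central leaf re-enters $D_v$, and that two distinct plaques intersect inside $\hat{D}_v$; via Hammerlindl's tube estimate this reduces to a purely linear transversality question between the $A$-central line and the $A$-stable/unstable plane, which is settled once $M$ is chosen large in comparison with $R_c$. A minor technical point is bounding the boundary term in the Fubini average (points near $\partial B_L$ lie in fewer translates $\hat{D}_v$), but this is lower order in $L$ and does not affect the final estimate $\alpha/(2K)$.
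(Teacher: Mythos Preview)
Your argument works, but the paper's route is more direct. The paper never translates the basepoint: it simply takes $M$ large enough that $\hat D$ contains $N(M)$ whole fundamental cubes and meets only $\tilde N(M)$ further ones with $\tilde N(M)/N(M)\to 0$ (this is exactly where Hammerlindl's $R_c$-tube estimate is used), so already for the \emph{single} box $\hat D$ one has $m\bigl(\pi^{-1}(A_n)\cap\hat D\bigr)\ge \alpha N(M)/\bigl(N(M)+\tilde N(M)\bigr)\ge\alpha/2$; one Rokhlin disintegration along the plaques and U.B.D.\ then finish. Your translation-averaging over $v\in B_L$ is a legitimate alternative way to exploit the $\mathbb Z^3$-periodicity of $\pi^{-1}(A_n)$, but it is extra machinery that the fundamental-domain count avoids entirely.

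One caution on your Fubini step: for \emph{fixed} $M$, the $L\to\infty$ limit of your average is not literally $m(A_n)\cdot\mathrm{vol}(T_0)$ but rather $\int_{A_n}\tilde\psi\,dm$, where $\tilde\psi(x)=\bigl|\{u\in\mathbb R^3:x\in\hat D_u\}\bigr|$ is the (periodic) incidence weight; since $\hat D_v$ is not an exact translate of $\hat D_0$ for non-integer $v$, this weight is not constant, and the two expressions agree only up to the oscillation of $\tilde\psi$, which becomes small only as $M\to\infty$ (via Hammerlindl's asymptotic-direction estimate). So $M$ has to be large for a second reason beyond the partition structure, and the ``boundary error of order $\diam(T_0)/L$'' you cite does not account for this residual. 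The fix is routine and the bound $\alpha/(2K)$ survives.
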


\begin{proof}
Recall that $m(A_n) \geq \alpha.$ As $M$ is large, $\hat{D}$ will include a large number $N(M)$ of fundamental domains (cubes) where $\pi$ is invertible. That is $C_i \subset \hat{D}$ where $C_i$ are unitary cubes for $i = 1, \cdots, N(M)$. However $\hat{D}$ may intersect partially $\tilde{N}(M)$ other fundamental cubes, i.e $\hat{D} \cap \tilde{C_i} \neq \emptyset$ but $ \tilde{C_i} \notin \hat{D}$ for $i = 1, \cdots, \tilde{N}(M).$
By the above proposition we claim that $$\lim_{M \rightarrow \infty} \frac{\tilde{N}(M)}{N(M)} = 0.$$

So for large enough $M$ we have $$  m (\pi^{-1}(A)_n) \cap \hat{D}) \geq  \frac{\alpha N(M)}{ N(M) + \tilde{N}(M)} \geq \alpha/2.$$
Now we desintegrate along the plaques in $\hat{D}$ by Rokhlin we get plaques such that $m_z(\pi^{-1}(A_n)) \geq \alpha/2$ and by definition \ref{abs.contIII} of uniform bounded density property, it yields that $Leb_{z} (\pi^{-1}(A_n)) \geq \alpha /2K.$

\end{proof}

%
%
%
%
%
%
%
%
%

\section{Appendix}

Here we prove the proposition \ref{distantlip}.

\begin{definition}
Let $x,y$ different points such that $y \in \mathcal{F}^u_x.$  Define $Q_{\infty}(x,y) \subset \mathcal{F}^{cu}_x $ being the strip  limited by center leaves $\mathcal{F}^c_x$ and $\mathcal{F}^c_y,$

\end{definition}

\begin{lemma}[Geometric Control] \label{trigo1} Let  $f: \mathbb{T}^3 \rightarrow \mathbb{T}^3$ be a  partially hyperbolic diffeomorphism. Then  there exists a positive constants $C > 1$ such that  for any strip $Q_{\infty}(x,y),$ we have

 $$C^{-1} \leq d^u(z, h_u(z)) \leq C,$$

 where $z \in \mathcal{F}^c_x$ and $h_u(z) = \mathcal{F}^u_z \cap \mathcal{F}^c_y,$ the image  of $z$ by unstable holonomy $h_u:\mathcal{ F}^c_x \rightarrow \mathcal{F}^c_y.$

\end{lemma}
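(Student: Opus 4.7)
The plan is to work in the universal cover $\mathbb{R}^3$ and combine the quasi-isometry of the invariant foliations with the asymptotic alignment supplied by Proposition \ref{H1}. Let $\widetilde{f}, \widetilde{A}$ denote the lifts, and $E^c_A, E^u_A, E^s_A$ the eigendirections of $\widetilde{A}$. Lift $x, y$ to $\widetilde{x}, \widetilde{y} \in \mathbb{R}^3$; since $y \in \mathcal{F}^u_x$, quasi-isometry of $\mathcal{F}^u$ gives $\|\widetilde{x} - \widetilde{y}\|$ comparable to $d^u(x, y)$. The statement of the lemma should be understood as asserting that $d^u(z, h_u(z))$ is comparable to $d^u(x,y)$ with universal multiplicative constants (equivalently, $C^{-1} \leq d^u(z, h_u(z)) \leq C$ once the strip is fixed); this is exactly the ingredient needed in Proposition \ref{distantlip}.

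The first step is a tube lemma. By Theorem \ref{H3} the lift $\widetilde{h}$ of the leaf conjugacy satisfies $\|\widetilde{h} - \mathrm{Id}\|_\infty < \infty$, and $\widetilde{h}$ sends affine $A$-central lines onto central leaves of $\widetilde{f}$, so each $\mathcal{F}^c_{\widetilde{z}}$ lies in a uniform $R$-neighbourhood of the affine line $\ell_{\widetilde{z}} := \widetilde{z} + \mathbb{R} E^c_A$. A parallel statement for $\mathcal{F}^u_{\widetilde{z}}$ follows from Proposition \ref{H1} and quasi-isometry of $\mathcal{F}^u$: after a uniformly bounded initial segment the unit tangent to the leaf is close to $E^u_A$, so each unstable leaf lies in a tube of uniform radius around an affine line in direction $E^u_A$. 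In particular $\mathcal{F}^c_{\widetilde{x}} \subset U_R(\ell_{\widetilde{x}})$ and $\mathcal{F}^c_{\widetilde{y}} \subset U_R(\ell_{\widetilde{y}})$, two tubes around parallel lines.

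Let $\pi$ be the projection onto $E^u_A \oplus E^s_A$ along $E^c_A$. Then $\pi(\ell_{\widetilde{x}})$ and $\pi(\ell_{\widetilde{y}})$ are points at Euclidean distance $\|\pi(\widetilde{y} - \widetilde{x})\|$. Applying Proposition \ref{H1} along the unstable leaf, $(\widetilde{y}-\widetilde{x})/\|\widetilde{y}-\widetilde{x}\|$ is close to $E^u_A$ once $\|\widetilde{x}-\widetilde{y}\|$ is large, so $\|\pi(\widetilde{y} - \widetilde{x})\|$ is comparable to $\|\widetilde{x} - \widetilde{y}\|$ and hence to $d^u(x, y)$. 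For any $z \in \mathcal{F}^c_{\widetilde{x}}$ the tube lemma gives $\pi(z) \in U_R(\pi(\ell_{\widetilde{x}}))$ and $\pi(h_u(z)) \in U_R(\pi(\ell_{\widetilde{y}}))$, so
\[
  \|\pi(z - h_u(z))\| = \|\pi(\widetilde{y} - \widetilde{x})\| + O(R).
\]
Since $z$ and $h_u(z)$ lie on a common unstable leaf, quasi-isometry of $\mathcal{F}^u$ yields $d^u(z, h_u(z)) \asymp \|z - h_u(z)\|$, and the inclusion $\mathcal{F}^u_z \subset $ tube around $z + \mathbb{R} E^u_A$ converts the above $\pi$-estimate into a bound on $\|z - h_u(z)\|$ itself. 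This produces constants $C = C(f) > 1$ with $C^{-1} d^u(x,y) \leq d^u(z, h_u(z)) \leq C\, d^u(x,y)$, uniformly in the choice of $z \in \mathcal{F}^c_x$.

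The main obstacle I expect is the small-$d^u(x,y)$ regime, where the $O(R)$ tube error dominates $\|\pi(\widetilde{y} - \widetilde{x})\|$ and the asymptotic-direction argument becomes vacuous. I would handle this case separately: when $d^u(x, y)$ is small, the strip $Q_\infty(x, y)$ fits inside a fixed foliation chart and uniform transversality of $\mathcal{F}^u$ to $\mathcal{F}^{cs}$ (from compactness of $\mathbb{T}^3$ and continuity of the invariant bundles) yields bi-Lipschitz control of the unstable holonomy with constants depending only on $f$. Patching the far and near regimes, and then appealing to Livschitz regularity only at the final step needed for Proposition \ref{distantlip}, completes the proof.
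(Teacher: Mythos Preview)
Your approach---projecting along $E^c_A$ onto $E^u_A\oplus E^s_A$ and using the center-leaf tube coming from Theorem~\ref{H3}---is sound and genuinely different from the paper's. The paper argues by contradiction for a fixed strip with $d^u(x,y)=1$: for the lower bound it uses continuity of the leaf conjugacy $\widetilde h$ (if $d^u(z_n,h_u(z_n))\to 0$ then $\widetilde h(\mathcal{F}^c_x)$ and $\widetilde h(\mathcal{F}^c_y)$, which are distinct parallel affine lines, would have distance zero); for the upper bound it builds triangles with vertices $x_n\in\mathcal{F}^c_x$, $y_n=h_u(x_n)$, and a point $z_n\in\mathcal{F}^c_y$ at bounded distance from $x_n$ (obtained via $\widetilde h$), and uses Proposition~\ref{H1} to show the angle at $y_n$ tends to $\angle(E^c_A,E^u_A)>0$, which is incompatible with two sides going to infinity while the third stays bounded. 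A density argument then carries the constants to every strip of unit width. Your projection argument yields the comparison with $d^u(x,y)$ directly and avoids that separate density step.

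One step, however, does not go through as written. Proposition~\ref{H1} concerns \emph{secant} directions $(w-z)/\|w-z\|$, not tangents, and even tangents eventually $\varepsilon$--close to $E^u_A$ would only bound the transverse drift by $\varepsilon$ times arclength, which is unbounded; so the claim that $\mathcal{F}^u_{\widetilde z}$ lies in a uniform tube around $\widetilde z+\mathbb{R}E^u_A$ does not follow from the cited ingredients. You can bypass it entirely: the trivial inequality $\|v\|\ge \|\pi v\|/\|\pi\|$ already gives the lower bound on $\|z-h_u(z)\|$, and for the upper bound observe that if $\|z-h_u(z)\|$ were large then, applying Proposition~\ref{H1} to the unstable secant from $z$ to $h_u(z)$, one gets $\|\pi(z-h_u(z))\|\ge c\,\|z-h_u(z)\|$, which forces a bound in terms of $\|\pi(\widetilde y-\widetilde x)\|+O(R)$. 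Finally, in your small--$d^u(x,y)$ paragraph the strip $Q_\infty(x,y)$ is unbounded in the center direction and never ``fits inside a fixed foliation chart''; since the paper only applies the lemma with $d^u(x,y)=1$, you may simply drop that case.
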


\begin{proof} First fix $x,y  \in \mathbb{T}^3$ such that $y \in \mathcal{F}^u_x$ and $d^u(x,y) = 1.$  Consider the foliations in the covering space $\mathbb{R}^3$ and take the strip $Q_{\infty}(x,y).$  Let $h: \mathbb{R}^3 \rightarrow \mathbb{R}^3,$ be as in theorem \ref{H3}, leading leaves of  $\mathcal{F}^c$ in leaves of $\mathcal{F}^c_A$ where $A,$ is the linearization of $f.$ Call $L_x = h(\mathcal{F}^c_x)$ and $L_y = h(\mathcal{F}^c_y).$

Suppose that  there is a sequence $x_n \in \mathcal{F}^c_x,$  such that $d^u(x_n, y_n) \rightarrow 0,$ where  $y_n = h_u(x_n).$ By continuity of  $h,$ we have

$$ || h(x_n) - h(y_n) || \rightarrow 0 \Rightarrow d(L_x, L_y) = 0,$$

that is the contradiction and it proves that the distance of $z$ and $h_u(z)$ is bounded below, $d^u(z, h_u(z)) \geq \eta.$

Now we prove that the distances are bounded above uniformly. Suppose that there is a sequence $x_n \in \mathcal{F}^c_f(x),$ such that  $d^u(x_n , y_n) \rightarrow +\infty,$ where $y_n = h_u(x_n).$ Construct $z_n \in \mathcal{F}^c_y,$ as follows.

Let  $u_n = h(x_n),$ and consider $v_n$ a point on $L_y,$ satisfying

$$d(u_n , v_n) = d(L_x, L_y).$$

Finally, consider $z_n = h^{-1}(v_n),$ we have

$$||x_n - z_n || = ||  (x_n - h(x_n)) + (u_n - v_n) + (h(z_n) - z_n)|| \leq d + 2 \delta,$$

such that  $d = dist(L_x, L_y)$ e $\delta = || h -id_{\mathbb{R}^3}||.$

Consider the  triangles  $\bigtriangleup_n,$ with vertices on  $x_n, z_n, y_n.$ Consider on each vertice the respective angles $\widehat{x_n}, \widehat{y_n}, \widehat{z_n}.$

\begin{center}
\begin{figure}[ht]

\centering{

\psfrag{Fcx}{\footnotesize$\mathcal{F}^c_x$}

\psfrag{Fcu}{\footnotesize$\mathcal{F}^c_y$}

\psfrag{Fu}{\footnotesize$\mathcal{F}^u_{x_n}$}

\psfrag{xn}{\footnotesize$x_n$}

\psfrag{yn}{\footnotesize$y_n$}

\psfrag{zn}{\footnotesize$z_n$}

\includegraphics[scale=0.33]{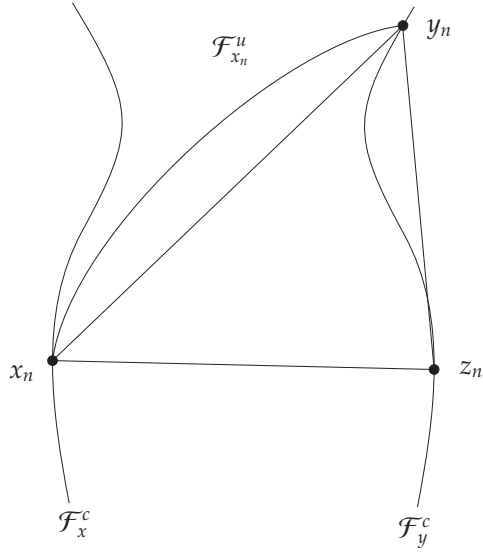}
\caption{Triangles $\Delta_n.$} }

\end{figure}
\end{center}

Note that $ || x_n - y_n|| \rightarrow + \infty$ by  quasi isometry of the foliation $\mathcal{F}^u.$

Triangular inequality implies
$$ ||  x_n - z_n|| + || y_n - z_n || \geq ||x_n - y_n ||$$
and  since $|| x_n - z_n||$ is  bounded consequently
$$ || y_n - z_n || \rightarrow +\infty.$$

  We have $|| x_n - y_n||, || y_n - z_n || \rightarrow +\infty,$ by proposition \ref{H1} we have  $\widehat{y_n}$ converges to $\theta = \angle (E^c_A, E^u_A) > 0.$
%
%
%
%
This gets a contradiction with $\|x_n - z_n\|$ bounded. Then, there exists  $K > 0,$ such that $d^u(z, h_u(z)) \leq K$ for any  $z \in \mathcal{F}^c_x.$ So we conclude that for any unstables segments $T$ and $T'$ connecting $\mathcal{F}^c_x$ and $\mathcal{F}^c_y$ we have

\begin{equation}
 \frac{\eta }{K} \leq \frac{||T ||}{||T' ||} \leq \frac{K}{\eta}, \label{lips1}
 \end{equation}

 where $||T||$ denotes the length of a unstable segment $T.$

 Consider now  another unstable segment $[a,b]_u,$ such that $d^u(a, b) = 1.$ Define $Q_{\infty}(a,b)$ the strip in $\mathcal{F}^{cu}_a$ limited by center leaves $\mathcal{F}^c_a$ and $\mathcal{F}^c_b.$ By density of foliations $\mathcal{F}^{cu}$ and $\mathcal{F}^c,$ and uniform convergence of holonomies $h_u$ and $h_c,$ we have from this and $(\ref{lips1})$ that

 \begin{equation}
  \frac{\eta }{2K} \leq \frac{||T ||}{||T' ||} \leq \frac{2K}{\eta} \label{lips}
 \end{equation}

 for any unstables segments $T$ and $T'$ connecting $\mathcal{F}^c_a$ and $\mathcal{F}^c_b,$ in particular when $T' =[a,b]_u, \; || T'|| = 1,  $ we have

 $$\frac{\eta }{2K} \leq ||T || \leq \frac{2K}{\eta}$$

 independent of the strip $Q_{\infty}(a,b).$
 \end{proof}

Now we complete the proof of proposition \ref{distantlip}.
Consider the strip $Q = Q_{\infty}(x,y),$ and unstables segments $T,T'$ connecting $\mathcal{F}^c_x$ and $\mathcal{F}^c_y.$ Suppose that $T = [a,b]_u ,$ such that $d^u(a,b) > 1.$ We claim that  there is a universal constant $C > 1$ independent of $Q,x,y$ such that

$$ C^{-1} \leq \frac{||T' ||}{||T||} \leq C.$$

To prove the claim,
On $[a,b]_u$ consider the partition $[a,b]_u = [a_0,a_1]_u \cup [a_1,a_2]_u \cup \ldots \cup [a_{n-1},a_n]_u,$ such that $a_0 = a, a_n = b.$ Choose the points $a_i, i=1, \ldots, n-1$ satisfying $d^{u} (a_{i-1}, a_i ) = 1$ and $d^u(a_{n-1}, b) \leq 1.$
By the previous lemma, we have the length of $T'$ is at most $\frac{2nK}{\eta}$ and at least $\frac{(n-1)\eta}{2K}.$ Note that, since we supposing here $d^u(a,b) > 1$ we have $n \geq 2.$ Thus

$$\frac{\eta}{4K} \leq \frac{(n-1)\eta}{2nK}\leq \frac{|| T '||}{|| T || } \leq \frac{2nK}{(n-1)\eta} \leq \frac{4K}{\eta}$$

Take $C = \frac{4K}{\eta}$ being the bi-Lipschitz constant for large $u-$scale.

\end{document}